\newtheorem{thm}{Theorem}[section]
\newtheorem{lem}[thm]{Lemma}
\newtheorem{prop}[thm]{Proposition}
\newtheorem{cor}[thm]{Corollary}
\newtheorem{df}[thm]{Definition\rm}
\newtheorem{rem}{\it Remark\/}
\def\C{{\mathbb C}}  
\def\H {\mathbb H} 
\def\id{\operatorname{id}}  
\def\ker{\operatorname{ker}}  
\def\M {\mathbb M}   
\def\pr{\operatorname{pr}} 
\def\R{{\mathbb R}}    
\def\N{{\mathbb N}}    
\def\supp{\operatorname{supp}}  
\def\og{\leavevmode\raise.3ex\hbox{$\scriptscriptstyle\langle\!\langle$~}}
\def\fg{\leavevmode\raise.3ex\hbox{~$\!\scriptscriptstyle\,\rangle\!\rangle$}}
\begin{document}
\title{\bf Generalised Operations in Free Harmonic Analysis}
\author
{Roland Friedrich\thanks{Saarland University, friedrich@math.uni-sb.de. Supported by the  ERC advanced grant ``Noncommutative distributions in free probability". 
}}

\maketitle
\date{}
\begin{abstract}
This article, which is substantially motivated by the previous joint work with J.~McKay~\cite{FMcK2012}, establishes the analytic analogues of the relations we found free probability has with Witt vectors. Therefore, we first present a novel analytic derivation of an exponential map which relates the free additive convolution on $\R$ with the free multiplicative convolution on either the unit circle or $\R_+^*$, for compactly supported, freely infinitely divisible probability measures.
We then introduce several new operations on these measures, which gives rise to more extended classes of operations. Then we consider the relation with classical infinite divisibility and the geometry of the spaces involved. Finally, we discuss the general structure, using the language of operads and algebraic theories, of the various operations we defined give rise to. 
\end{abstract}

\tableofcontents
\section{Introduction}
The theory of probability measures on metric groups is a well-studied subject, cf. e.g.~\cite{P2005}. A fundamental theorem therein states that the set of probability measures $M(X)$ on a group $X$, which is also a separable metric space, becomes a topological semi-group for the operation $*$, which is defined as the convolution of measures, i.e. for $\mu,\nu\in M(X)$, one has
$$
\mu*\nu(A):=\int_X\mu(Ax^{-1})d\nu(x),\quad A\in\mathcal{B}(X),
$$
with $\mathcal{B}(X)$ the Borel $\sigma$-algebra on $X$. An important observation is that the restriction imposed by positivity, reduces the original structure on the underlying space to a substructure on the associated space of measure, i.e.,
$$
\text{group~$\rightarrow$~semi-group}.
$$
An analogous theory, where the notion of independence is replaced by freeness, was introduced and developed by D.V. Voiculescu~\cite{V1986,V1987,VDN}, D.V. Voiculescu and H. Bercovici ~\cite{BV,BV1993}, and H. Bercovici and V. Pata~\cite{BPB}, who related free harmonic analysis with classically infinitely divisible probability measures, as an isomorphism of semi-groups. 

In~\cite{BV1993} H. Bercovici and D.V. Voiculescu discussed more general convolution operations on the set of probability measures supported on $\R$. Namely, they considered selfadjoint polynomial expressions $Q(T_1,\dots,T_n)$ in $n$ non-commuting variables $T_i$, and showed that the resulting distribution $\mu$,  
$$
\mu:=Q^{\square}(\mu_1,\dots,\mu_n)
$$
with $\mu_i=\mu_{T_i}$, $i=1,\dots,n$, for free random variables $T_1,\dots,T_n$ in a $W^*$-probability space $(A,\tau)$, can be expressed by the $\mu_i$ only.

The generalisation of this line of reasoning is C.~Malle's {\em Traffic}~\cite{M2011}, who considers the operad  generated by graph polynomials with an appropriate notion of independence, called {\em Traffic freeness}.

Here we shall consider more general operations directly on proper subsets of the set of probability measures supported on $\R$, i.e. we have
$$
Q^{\square}(\mu_1,\dots,\mu_n)\rightarrow\mathcal{P}_n(\mu_1,\dots,\mu_n)
$$
where the $\mathcal{P}_n$ either define a $\mathcal{P}$-operad or  the operations of an algebraic theory.

Our motivation to consider such general operations directly on probability distributions comes from the relation J.~McKay and the author~\cite{FMcK2011,FMcK2012}  found free probability, respectively free harmonic analysis, has with complex cobordism and the ring of Witt vectors, cf.~e.g.~\cite{H}. 

A fundamental role within this framework is played by free cumulants, which have been introduced earlier by R.~Speicher~\cite{S1997} and which form the base of the combinatorial approach to free probability, as developed by A. Nica and R. Speicher~\cite{NS}.

Now, this link permitted, amongst other things, to derive a formula which linearises Voiculescu's  $S$-transform~\cite{V1987} and also to define an additional convolution operation on free cumulants which, a priori, is not induced by the underlying algebra structure. Subsequently, it was shown~\cite{FMcK2013a} that for several classes of probability distributions, this new operation indeed respects the analytic constraints given by positive definiteness, and that it induces a partial ring structure on the set of free cumulants.

Here, we not only extend our first analytic results~\cite{FMcK2013a} to the complete set of freely infinitely divisible probability measures with compact support, but also discuss (parts of) the algebra of operations on infinitely divisible probability measures in general. 

Understanding the set, respectively algebra of operations, although not traditional in probability theory, has been of primordial importance in other mathematical fields and hence intensively studied. For example, in formal language theory several important algebraic operations such as, e.g. the shuffle product, have been described which preserve the set of recognisable formal languages or in the theory of Witt vectors one has the functorial endomorphisms, e.g. Frobenius and  Verschiebung.

Therefore, our results are not only important from a general perspective but should hopefully also have fruitful applications, e.g. in random matrix theory.

Organisation of the paper. In Section 1, after giving the general motivation, the needed concepts from free probability theory are recalled.  Section 2, develops the analytic counterpart of the algebraic theory presented in~\cite{FMcK2012} for freely infinitely divisible probability measures with compact support. The first subsection discusses the holomorphic linearisation of the $S$-transform, then establishes the structure of an analytic Witt semi-ring, introduces, in analogy with the theory of Witt vectors, several endomorphisms on the space of infinitely divisible measure and then continues to discuss the geometric meaning of the Lévy-Khintchine representation. In the last subsection, then the relation with classically infinitely divisible probability measures is established via the Bercovici-Pata bijection~\cite{BPB}.  The last section, Section 3, introduces a categorical treatment, and it consists of three subsections. It  starts with an abstract discussion of convex sets and the barycentric algebra, and its relation to the operations we previously defined on infinitely divisible probability measures. In particular it highlights the fact that one can not mix two different co-ordinate systems, i.e. operations based on both moments and cumulants can not be consistently defined. Then we introduce the fundamental (finite) Giry monad and its relations to infinitely divisible measures. And finally, in the last part, we summarise the previous sections within the language of operads and the more general framework of algebraic theories.

\subsection{Preliminaries from free probability}

General information about free probability can be obtained from the monographs~\cite{NS,VDN}. Here we shall consider the one-dimensional case only.
Let $k$ be a field of characteristic zero, and $\mathbf{Alg}_k$ the category of associative and unital $k$-algebras, and $\mathbf{cAlg}_k$ the associative, commutative and unital $k$-algebras. \begin{df}
Let $A\in\mathbf{Alg}_k$, with unit $1_A$, and let $\phi:A\rightarrow k$ be a $k$-linear functional with $\phi(1_A)=1_k$. The pair $(A,\phi)$, is called a {\bf non-commutative $k$-probability space}, and the elements $a\in A$ the {\bf random variables}.
\end{df}
The algebraic $k$-probability spaces form a category, which we denote by $\mathbf{Alg_kP}$. The subcategory of commutative algebraic probability spaces is denoted by $\mathbf{cAlg_kP}$. The full subcategory of $C^*$-probability spaces $\mathbf{C^*Alg_{\C}P}$, has as objects pairs $(A,\varphi)$, where $A$ is a $C^*$-algebra and $\phi$ a state.
\begin{df}
The {\bf law} or {\bf distribution} of a random variable $a\in A$, is the $k$-linear functional $\mu_a\in k[[x]]$,
given by $\mu_a(x^n):=\phi(a^n)$, $n\in\N$.
The coefficients $(m_n(a))_{n\in\N}$ of the power series $\mu_a$ are the {\bf moments} of $a$.
\end{df}

\begin{df}[Freeness]
Let $(A,\phi)\in\mathbf{Alg_kP}$ and $I$ an index set.  A family of sub-algebras $(A_i)_{i\in I}$, with $1_A\in A_i\subset A$, $i\in I$, is called {\bf freely independent} if for all $n\in\N^*$, and $a_k\in {A_{i_k}\cap\ker(\phi)}$, $1\leq k\leq n$, also
$$
a_1\cdots a_n\in{\ker(\phi)},
$$
holds, given that $i_k\neq i_{k+1}$, $1\leq k< n$.
\end{df}
One should note that the requirement for freeness is that {\em consecutive} indices must be distinct, thus $i_j=i_{j+2}$ is possible.

Let $M_c:\mathbf{Top}_{\operatorname{lcH}}\rightarrow\mathbf{Set}$ denote the functor from the category of locally compact Hausdorff topological spaces to the category of sets, which assigns to every $T\in\mathbf{Top}_{\operatorname{lcH}}$ the set of compactly supported Borel probability measures on $T$. We note that $\mathbf{Top}_{\operatorname{lcH}}$ is dual to the category of commutative $C^*$-algebras. 

Hence, $M_c(\C)$ corresponds to the set of all compactly supported Borel probability measures on $\C$, and similarly $M_c(\R), M_c(\R^*_+)$ and $M_c(S^1)$ to the subsets of compactly supported probability measures on $\R$, $\R^*_+:=(0,\infty)$ and the unit circle $S^1$, respectively. Let $M^*(-)\subset M_c(-)$ be the set of compactly supported probability measures with non-vanishing first moment and finally, let $\operatorname{Meas_c}$ denote the functor from $\mathbf{Top}_{\operatorname{lcH}}$ to compactly supported finite Borel measures.

If  $a$ is a self-adjoint element in a $C^*$-probability space, and therefore satisfies $a=a^*$,  then there exists a unique, compactly supported probability measure $\mu_a\in M_c(\R)$ such that for all $n\in\N$,
$$
\int_{\R} x^n d\mu_a(x)=\phi(a^n)
$$
holds. Let $\boxplus$ and $\boxtimes$ denote the free additive and free multiplicative convolution, respectively.

\begin{df}
Let $\mu\in M_{c}(\R)$. If for every $n\in\N^*$, there exists a $\mu_n\in M_c(\R)$,  which satisfies 
\begin{equation*}
\mu=\underbrace{\mu_n\boxplus\dots\boxplus\mu_n}_{\text{$n$ summands}}
\end{equation*}
then $\mu$ is called {\bf $\boxplus$-infinitely divisible}. Let $\mu\in M_c(\R_+)$ or $\mu\in M^*(S^1)$. If for every $n\in\N^*$ there exists a $\mu_n\in M_c(\R_+)$ or $\mu\in M^*(S^1)$ which satisfies 
\begin{equation*}
\mu=\underbrace{\mu_n\boxtimes\dots\boxtimes\mu_n}_{\text{$n$ factors}}
\end{equation*}
then $\mu$ is called {\bf $\boxtimes$-infinitely divisible}.
\end{df}
Following the notion used in~\cite{BNT,C2014}, we denote by $\operatorname{ID}_c(\R,\boxplus)$ the set of all $\boxplus$-infinitely divisible probability measures on $\R$ with compact support, and analogously $\operatorname{ID}_c(\R_+^*,\boxtimes)$ the set of all compactly supported probability measures on $\R_+^*$ which are $\boxtimes$-infinitely divisible and $\operatorname{ID}^*(S^1,\boxtimes)$ the set of all $\boxtimes$-infinitely divisible probability measures on $S^1$ with non-vanishing first moment.

For the necessary definitions and properties concerning Voiculescu's $R$- and $S$-transform one should consult~\cite{V1986,V1987,VDN} and for its combinatorial description~\cite{NS}. Let us point out that we shall, by abuse of notation, consider $R$ either as a map into $k[[z]]$ or $zk[[z]]$, which however should be clear from the context.  

\section{Free harmonic analysis and Witt vectors}
In~\cite{FMcK2012,FMcK2013}, we derived a relation between the one-dimensional free additive and the one-dimensional free multiplicative convolution, which permits to understand  $\boxtimes$  in terms of $\boxplus$ algebraically. However, in higher dimensions such a linear relation does not hold in general, as we showed in~\cite{FMcK2013a,FMcK2013}.  So, let us first recall the corresponding statement.  
\begin{thm}[\cite{FMcK2012}]
There exists a group isomorphism $\operatorname{LOG}:(\Sigma^{\times}_1, \boxtimes)\rightarrow(\Sigma,\boxplus)$, with inverse $\operatorname{EXP}$, defined by the diagram: 
\[
\begin{xy}
  \xymatrix{
      \Lambda(\C) \ar[d]_{\frac{d}{dz}\ln}^{\text{``ghost map"}} & (\Sigma^{\times}_1, \boxtimes) \ar[l]_{S} \ar[d]^{\operatorname{LOG}} \\
                              {\C[[z]]}\ar[r]^{R^{-1}} & (\Sigma,\boxplus) 
               }
\end{xy}
\]
where $R$ and $S$ denote the $R$- and $S$-transform, respectively.
\end{thm}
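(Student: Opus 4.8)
The plan is to work entirely on the level of transforms, exploiting the two known linearisations. The $R$-transform converts $\boxplus$ into ordinary addition of power series: $R_{\mu\boxplus\nu}=R_\mu+R_\nu$, so $R^{-1}\colon\C[[z]]\to(\Sigma,\boxplus)$ (with $\C[[z]]$ under addition) is a group isomorphism essentially by Voiculescu's theorem, once one checks that on $\Sigma$ (the compactly supported, $\boxplus$-infinitely divisible measures) the $R$-transform is a bijection onto the relevant space of power series. Similarly the $S$-transform satisfies $S_{\mu\boxtimes\nu}=S_\mu\cdot S_\nu$, turning $\boxtimes$ into multiplication; composing with the logarithmic derivative $\frac{d}{dz}\ln$ (the ``ghost map'' coming from the $\Lambda$-ring structure on $\Lambda(\C)$) converts multiplication into addition. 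So the strategy is: (i) show each arrow in the square is a well-defined isomorphism of abelian groups onto its target, (ii) show the square commutes, and (iii) conclude that $\operatorname{LOG}:=R^{-1}\circ\bigl(\tfrac{d}{dz}\ln\bigr)\circ S$ is the desired group isomorphism, with $\operatorname{EXP}$ its inverse.

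First I would pin down the spaces. On $(\Sigma^\times_1,\boxtimes)$ the $S$-transform lands in $\Lambda(\C)$ (power series with constant term $1$), and on $(\Sigma,\boxplus)$ the $R$-transform lands in $\C[[z]]$; I would verify that in both cases, when restricted to the infinitely divisible classes, these are \emph{surjective} and injective — injectivity is standard (the transform determines the measure), surjectivity is where infinite divisibility is used, since an arbitrary power series need not be an $R$-transform of a probability measure but the L\'evy--Khintchine/Voiculescu representation guarantees it is when we allow the full infinitely divisible class. Then I would check the homomorphism property of each map: $S$ is a $\boxtimes$-to-multiplication homomorphism, $R$ is a $\boxplus$-to-addition homomorphism (these are Voiculescu's theorems), and $\frac{d}{dz}\ln$ is the classical isomorphism $(\Lambda(\C),\cdot)\xrightarrow{\sim}(\C[[z]],+)$, $f\mapsto f'/f$, with inverse $g\mapsto\exp\!\int g$. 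Composing gives that $\operatorname{LOG}$ is a group homomorphism, and since it is a composite of bijections it is an isomorphism.

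The commutativity of the square is then essentially a definition: $\operatorname{LOG}$ is \emph{defined} by going around the square, so the only content is that the two maps one can write down — $R^{-1}\circ\frac{d}{dz}\ln\circ S$ and its putative inverse $S^{-1}\circ\exp\!\int\circ\,R$ — actually send infinitely divisible measures to infinitely divisible measures, i.e. that the composite respects the \emph{analytic} constraints, not merely the formal power-series ones. Concretely, given $\mu\in(\Sigma^\times_1,\boxtimes)$ one must know that the power series $R^{-1}$ is applied to, namely $\frac{d}{dz}\ln S_\mu$, is genuinely the $R$-transform of a compactly supported $\boxplus$-infinitely divisible measure, and conversely. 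I expect \textbf{this to be the main obstacle}: verifying that the formal bijection descends to a bijection between the two concretely-defined classes of probability measures, which is presumably where the hypotheses of compact support and infinite divisibility are really consumed. In the original source \cite{FMcK2012} this is handled via the explicit interplay between the free cumulant generating function and the $S$-transform; here I would simply invoke that, noting that once the set-level bijection is established the group-isomorphism statement follows formally from the homomorphism properties recalled above.
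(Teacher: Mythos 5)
There is a genuine problem, and it lies in how you have read the statement, not in the formal skeleton of your argument. The theorem you are proving is the \emph{algebraic} result recalled from \cite{FMcK2012}: $\Sigma$ and $\Sigma^{\times}_1$ there are spaces of formal distributions (moment sequences, i.e.\ linear functionals on polynomials), not compactly supported $\boxplus$-infinitely divisible probability measures. This is exactly why the statement can assert a \emph{group} isomorphism: in the formal setting every power series in $\C[[z]]$ is the $R$-transform of a unique distribution, every element of $\Lambda(\C)$ arises as an $S$-transform of an element of $\Sigma^{\times}_1$, and inverses for $\boxplus$ and $\boxtimes$ exist, so the proof is the purely formal composition $\operatorname{LOG}=R^{-1}\circ\bigl(\tfrac{d}{dz}\ln\bigr)\circ S$ of three bijective homomorphisms, with no positivity input whatsoever. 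Your proposal instead takes $\Sigma$ to be the infinitely divisible compactly supported measures, declares the ``descent to the analytic classes'' to be the main obstacle, and then defers precisely that step to \cite{FMcK2012}. That is doubly problematic: first, deferring the crux to the cited source is circular as a proof; second, under your reading the statement would in fact be \emph{false} as stated, because probability measures under $\boxplus$ form only a semigroup (no $\boxplus$-inverses for non-degenerate measures), and the analytic counterpart developed later in this very paper is accordingly only a morphism of semigroups $\operatorname{EXP}:(\operatorname{ID}_c(\R,\boxplus),\boxplus)\to(\operatorname{ID}^*(S^1,\boxtimes),\boxtimes)$, respectively a monomorphism on the subset $\mathcal{R}_{[-1,0]}$ for $\R_+^*$ --- not a group isomorphism. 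The paper is explicit that positivity degrades the structure (``group $\rightarrow$ semi-group'', ``ring $\rightarrow$ semi-ring''), so the hypotheses you expected to be ``consumed'' (compact support, infinite divisibility, L\'evy--Khintchine) simply do not occur in this theorem; they belong to the analytic analogues proved afterwards.

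To repair the argument, drop the analytic framing entirely: work with $\Sigma$ as the set of all normalised linear functionals on $\C[x]$ (equivalently their moment or free-cumulant sequences), note that $R$ is a bijection onto $\C[[z]]$ (with the paper's stated abuse of notation between $k[[z]]$ and $zk[[z]]$) converting $\boxplus$ into addition, that $S$ is a bijection from $\Sigma^{\times}_1$ onto $\Lambda(\C)$ converting $\boxtimes$ into multiplication, and that the ghost map $f\mapsto f'/f$ is a group isomorphism $(\Lambda(\C),\cdot)\to(\C[[z]],+)$; the composite is then the asserted isomorphism and the square commutes by construction. The parts of your write-up establishing the homomorphism properties and the composition are fine and would survive unchanged in this corrected setting.
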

Somewhat earlier, M. Mastnak and A. Nica~\cite{MN} had obtained, using Hopf algebraic methods in particular characters, a similar statement for the one-dimensional case. Subsequently, G.~Cébron~\cite{C2014} established a semi-group homomorphism for freely infinitely divisible probability distributions by applying a Lévy-Khintchine type representation, and related the classical with the free case via the Bercovici-Pata bijection. M. Anshelevich and O. Arizmendi~\cite{AA2016} then discussed a map, the {\em wrapping transformation} $W$, which is a morphism of semigroups between the set of probability measures on the real line and the circle, with $\boxplus$ and $\boxtimes$ as convolution operation, respectively, and showed that $W$ transforms the additive free, Boolean, and monotone convolution into the respective multiplicative convolutions.

Now, by using the complex analytic description of compactly supported, freely infinitely divisible probability measures, we shall give first a novel analytic derivation of an exponential morphism and then introduce some new analytic operations on measures itself.

\subsection{Holomorphic linearisation}

The derivations in this subsection are based on the analytic characterisations given in~[\cite{VDN}, Section 3.7]. As usual, let $\H$ denote the upper half-plane, $\overline{\H}$ the closed upper half-plane, $-\overline{\H}$ the closed lower half-plane, $\Re(z)$ and $\Im(z)$ denote the real and imaginary part of a complex number $z$, respectively.

We consider the following sets of germs of analytic functions, cf.~[\cite{VDN} Theorems~$3.7.2$, $3.7.3$ and $3.7.4.$]:

\begin{enumerate}
\item Let $\mathcal{R}:=\mathcal{R}_{\infty,\boxplus}(\R)$ denote the set of stalks of functions, $R(z)$, which are analytic in a neighbourhood of $(\C\setminus\R)\cup\{0\}$ (direct limit), and satisfy for all $z\in\H$: 
\begin{equation}
\label{R_upper_half_plane}
R(\bar{z})=\overline{R(z)}\quad\text{and}\quad \Im(R(z))\geq0.
\end{equation}
\item Let $\mathcal{V}:=\mathcal{V}_{\infty,\boxtimes}(\R^*_+)$ denote the set of stalks of functions, $v(z)$, which are analytic in a neighbourhood of $(\C\setminus\R)\cup[-1,0]$ (direct limit), and satisfy for all $z\in\H$: 
$$
v(\bar{z})=\overline{v(z)}\quad\text{and}\quad \Im(v(z))\leq0.
$$
\item Let $\mathcal{U}:=\mathcal{U}_{\infty,\boxtimes}(S^1)$ denote the sheaf of functions, $u(z)$, which are analytic in the shifted half-plane ${H}_{-1/2}:=\{z\in\C~|\Re(z)>-\frac{1}{2}\}$ and satisfy for all $z\in H_{-1/2}$:
$$
\Re(u(z))\geq 0.
$$
\end{enumerate}

\begin{lem}
\label{convex_cones}
The following properties hold:
\begin{enumerate}
\item  $(\mathcal{R},+)$, $(\mathcal{V},+)$ and $(\mathcal{U},+)$ are {\em commutative monoids} (semi-groups), with neutral element $0$. 
\item The sets $\mathcal{R}$, $\mathcal{V}$ and $\mathcal{U}$ are {\em convex cones}, i.e.  if  $f,g\in \mathcal{R}, \mathcal{V}, \mathcal{U}$ and $\alpha,\beta\in\R_+$, then $\alpha f+\beta g\in\mathcal{R}, \mathcal{V}, \mathcal{U}$. 
\end{enumerate}
\end{lem}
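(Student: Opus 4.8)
The plan is to check each clause directly from the definitions, the point being that all three defining packages of conditions --- analyticity on the prescribed domain, the conjugation symmetry $f(\bar z)=\overline{f(z)}$ (for $\mathcal{R}$ and $\mathcal{V}$), and the one-sided bound on $\Im f$ (resp.\ $\Re f$ for $\mathcal{U}$) --- are stable under pointwise addition and under multiplication by a non-negative real scalar. I would treat the three cases in parallel.

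First I would note that the zero germ belongs to $\mathcal{R}$, $\mathcal{V}$ and $\mathcal{U}$: the constant $0$ is entire, hence analytic on every domain in question, it satisfies $0=\overline{0}$, and $\Im 0=\Re 0 =0$ meets each sign requirement with equality; this supplies the neutral element. Next, for closure under $+$ in $\mathcal{R}$ (and verbatim for $\mathcal{V}$): represent two given germs by analytic functions $f_1,f_2$ on open sets $\Omega_1,\Omega_2$ each containing $(\C\setminus\R)\cup\{0\}$; then $\Omega_1\cap\Omega_2$ is again an open set containing $(\C\setminus\R)\cup\{0\}$, so $f_1+f_2$ is analytic there and its germ depends only on the germs of $f_1$ and $f_2$, so the operation is well defined on the direct limit. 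For $z\in\H$ one has $(f_1+f_2)(\bar z)=\overline{f_1(z)}+\overline{f_2(z)}=\overline{(f_1+f_2)(z)}$ and $\Im\big((f_1+f_2)(z)\big)=\Im f_1(z)+\Im f_2(z)\ge 0$, so $f_1+f_2\in\mathcal{R}$. Commutativity and associativity are inherited from pointwise addition of functions. The argument for $\mathcal{U}$ is the same but easier: the domain $H_{-1/2}$ is fixed (so there is no direct-limit bookkeeping), there is no conjugation symmetry to check, and one uses $\Re$ in place of $\Im$. This gives part (1).

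For part (2), let $f,g$ lie in one of the three sets and let $\alpha,\beta\in\R_+$. As above $\alpha f+\beta g$ is analytic on the common domain, and it lies in $\mathcal{R},\mathcal{V},\mathcal{U}$ respectively once the two sign/symmetry checks go through. Since $\alpha,\beta$ are real, $\overline{\alpha f(z)+\beta g(z)}=\alpha f(\bar z)+\beta g(\bar z)$, so the conjugation symmetry survives; and since $\alpha,\beta\ge 0$, $\Im(\alpha f(z)+\beta g(z))=\alpha\,\Im f(z)+\beta\,\Im g(z)$ (resp.\ the same with $\Re$) keeps the required sign. Hence the set is a convex cone; note this also re-proves closure under $+$ in part (1) as the case $\alpha=\beta=1$.

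I do not expect a genuine obstacle: the content is elementary. The only point deserving a moment's attention --- and the only place the ``direct limit'' wording in the definitions of $\mathcal{R}$ and $\mathcal{V}$ actually intervenes --- is the observation that a finite intersection of neighbourhoods of $(\C\setminus\R)\cup\{0\}$ (resp.\ of $(\C\setminus\R)\cup[-1,0]$) is again such a neighbourhood, which is what lets the pointwise operations descend to germs.
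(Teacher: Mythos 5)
Your proof is correct and follows essentially the same route as the paper: a direct check that the conjugation symmetry and the one-sided bound on $\Im$ (resp.\ $\Re$) survive non-negative linear combinations, with part (1) recovered as the special case $\alpha=\beta=1$ together with the zero element. The paper's proof is simply a terser version of yours; your extra care with the direct-limit/germ bookkeeping and the explicit neutral element is fine but not a different argument.
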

\begin{proof}
We only have to show the second statement, as the first one follows from $\alpha=\beta=1$ and $\alpha=0$ or $\beta=0$, respectively. 

Let us consider the situation for $\mathcal{R}$. If $f(z),g(z)\in\mathcal{R}$ and $\alpha,\beta\geq0$ then $\alpha f(\overline{z})=\overline{\alpha f(z)}$ and $\Im(\alpha f(z))=\alpha\Im(f(z))\geq0$ for $z\in\H$. Further, $\alpha f(\overline{z})+\beta g(\overline{z})=\overline{\alpha f(z)}+\overline{\beta g(z)}$ and $\Im(\alpha f(z)+\beta g(z))=\alpha\Im(f(z))+\beta\Im(g(z))\geq0$.

The remaining cases are treated similarly. 
\end{proof}

\begin{lem}
$(\operatorname{ID}_c(\R,\boxplus),\boxplus)$ is a commutative monoid with neutral element $\delta_0$. Further, it is a sub-monoid of $(M_c(\R),\boxplus)$ and the $R$-transform is an isomorphism of commutative monoids:
$$
R:(\operatorname{ID}_c(\R,\boxplus),\boxplus)\rightarrow(\mathcal{R}_{\infty,\boxplus}(\R),+).
$$ 

\end{lem}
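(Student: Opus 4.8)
The whole statement will rest on the Voiculescu--Bercovici analytic description of $\boxplus$-infinite divisibility recalled just above (\cite{VDN}, Theorems~3.7.2--3.7.4): a measure $\mu\in M_c(\R)$ lies in $\operatorname{ID}_c(\R,\boxplus)$ if and only if its $R$-transform, a priori only a germ at $0$, continues analytically to an element of $\mathcal{R}_{\infty,\boxplus}(\R)$, and conversely every germ in $\mathcal{R}_{\infty,\boxplus}(\R)$ arises this way. Granting this, the plan is to dispatch three routine pieces --- the monoid structure, the homomorphism property, and bijectivity --- the only genuinely analytic input being the surjectivity half of the cited theorem.

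First I would record the monoid structure. Commutativity and associativity of $\boxplus$ on $M_c(\R)$ are standard; $\delta_0$ is neutral since $G_{\delta_0}(z)=z^{-1}$ forces $R_{\delta_0}=0$, whence $\delta_0\boxplus\mu=\mu$, and $\delta_0=\delta_0\boxplus\cdots\boxplus\delta_0$ shows $\delta_0\in\operatorname{ID}_c(\R,\boxplus)$. For closure under $\boxplus$: if $\mu=\mu_n^{\boxplus n}$ and $\nu=\nu_n^{\boxplus n}$ with $\mu_n,\nu_n\in M_c(\R)$, then commutativity and associativity give $\mu\boxplus\nu=(\mu_n\boxplus\nu_n)^{\boxplus n}$, and $\mu_n\boxplus\nu_n\in M_c(\R)$ because the free convolution of two compactly supported measures is again compactly supported (bound the support via $\|a+b\|\le\|a\|+\|b\|$ for the representing self-adjoint operators). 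Hence $\operatorname{ID}_c(\R,\boxplus)$ is a sub-monoid of $(M_c(\R),\boxplus)$. Alternatively --- the route I would actually prefer once the analytic characterisation is in hand --- closure is immediate from $R_{\mu\boxplus\nu}=R_\mu+R_\nu$ together with Lemma~\ref{convex_cones}, which says $(\mathcal{R},+)$ is closed under addition.

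Next I would check that $R$ is a homomorphism and a bijection. That it is a homomorphism is exactly Voiculescu's linearisation $R_{\mu\boxplus\nu}=R_\mu+R_\nu$ together with $R_{\delta_0}=0$, the forward direction of the characterisation guaranteeing that $R$ lands in $\mathcal{R}_{\infty,\boxplus}(\R)$. For injectivity, the germ $R_\mu$ determines its Taylor coefficients at $0$, i.e.\ the free cumulants, hence all moments $m_n(\mu)$ through the moment--cumulant relations, and compact support makes the moment problem determinate, so $\mu$ is recovered (equivalently $R_\mu$ determines $K_\mu(z)=R_\mu(z)+z^{-1}$, hence $G_\mu$, hence $\mu$ by Stieltjes inversion). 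For surjectivity I would invoke the backward direction of \cite{VDN}, Theorems~3.7.2--3.7.4: given $u\in\mathcal{R}_{\infty,\boxplus}(\R)$, the free L\'evy--Khintchine representation yields a compactly supported probability measure $\mu$ with $R_\mu=u$, and such a $\mu$ is automatically $\boxplus$-infinitely divisible. Since a bijective monoid homomorphism has a homomorphic inverse, this finishes the proof.

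I expect the only real obstacle to be surjectivity: realising an arbitrary germ satisfying $u(\bar z)=\overline{u(z)}$ and $\Im u\ge 0$ on $\H$, analytic in a neighbourhood of $(\C\setminus\R)\cup\{0\}$, as the $R$-transform of an honest compactly supported probability measure, and verifying that analyticity precisely at $0$ matches up with compactness of the support (equivalently, with the free L\'evy measure in the representation being compactly supported). Both facts are part of the Voiculescu--Bercovici analytic machinery quoted above, which I would cite rather than reprove; everything else is bookkeeping with the linearisation formula and Lemma~\ref{convex_cones}.
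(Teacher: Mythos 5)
Your proposal is correct and follows essentially the same route as the paper: the closure computation $\mu\boxplus\nu=\mu_n^{\boxplus n}\boxplus\nu_n^{\boxplus n}=(\mu_n\boxplus\nu_n)^{\boxplus n}$ is exactly the paper's argument, and the isomorphism part rests, as in the paper, on the cited analytic characterisation of $R$-transforms of compactly supported $\boxplus$-infinitely divisible measures ([\cite{VDN}, Theorems~3.7.2--3.7.4]) together with the linearisation $R_{\mu\boxplus\nu}=R_\mu+R_\nu$. In fact you spell out more than the paper does, whose proof records only the closure computation and leaves the homomorphism, injectivity and surjectivity to the quoted references.
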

\begin{proof}
The first statement follows from 
$$
\mu\boxplus\nu=\mu_n^{\boxplus n}\boxplus\nu_n^{\boxplus n}=(\underbrace{\mu_n\boxplus\nu_n}_{\in M_c(\R)})^{\boxplus n}
$$
for some $\mu_n,\nu_n\in M_c(\R)$, $n\in\N^*$.
\end{proof}
\begin{lem}[Holomorphic linearisation of $\boxtimes$]
We have:
\begin{enumerate}
\item
There exists an isomorphism of commutative semi-groups, 
$$
\operatorname{EXP}:(\mathcal{V},+)\rightarrow (\operatorname{ID}_c(\R_+^*,\boxtimes),\boxtimes),
$$
given by
$v(z)\mapsto S^{-1}(e^{v(z)})$, which linearises $\boxtimes|_{\R^*_+}$.

\item
There exists an isomorphism of commutative semi-groups, 
$$
\operatorname{EXP}:(\mathcal{U},+)\rightarrow (\operatorname{ID}^*(S^1,\boxtimes),\boxtimes),
$$
given by $u(z)\mapsto S^{-1}(e^{u(z)})$, which linearises $\boxtimes|_{S^1\cap{M}^*(\C)}$.

\end{enumerate}
\end{lem}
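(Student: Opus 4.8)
The plan is to recognise $\operatorname{EXP}$ as the composite $S^{-1}\circ\exp$ and to deduce the claim from two ingredients: the multiplicativity $S_{\mu\boxtimes\nu}=S_\mu S_\nu$ of Voiculescu's $S$-transform, and the analytic characterisations of the $\boxtimes$-infinitely divisible classes in [\cite{VDN}, Theorems~$3.7.3$ and~$3.7.4$], which describe the range of the $S$-transform on $\operatorname{ID}_c(\R_+^*,\boxtimes)$ and on $\operatorname{ID}^*(S^1,\boxtimes)$. I would prove statement (1) in detail; statement (2) then follows by the same argument, replacing $\mathcal{V}$, $\R_+^*$ and the domain $(\C\setminus\R)\cup[-1,0]$ by $\mathcal{U}$, $S^1$ and $H_{-1/2}$, and the condition $\Im v\leq 0$ on $\H$ by $\Re u\geq 0$ on $H_{-1/2}$.

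First I would check that $\operatorname{EXP}$ is well defined, i.e.\ that $S^{-1}(e^{v})\in\operatorname{ID}_c(\R_+^*,\boxtimes)$ for every $v\in\mathcal{V}$. The function $e^{v(z)}$ is analytic on the same domain as $v$, is zero-free there, satisfies $e^{v(\bar z)}=\overline{e^{v(z)}}$ because $v(\bar z)=\overline{v(z)}$, and maps $\H$ into the closed unit disc because $\Im v\leq 0$ on $\H$; by [\cite{VDN}, Theorem~$3.7.3$] it is therefore the $S$-transform of a unique $\mu:=S^{-1}(e^{v})\in M^*(\R_+^*)$. Since $\mathcal{V}$ is a convex cone (Lemma~\ref{convex_cones}), $v/n\in\mathcal{V}$ for every $n\in\N^*$, so $e^{v/n}$ is again such an $S$-transform, and multiplicativity of $S$ gives $\bigl(S^{-1}(e^{v/n})\bigr)^{\boxtimes n}=S^{-1}(e^{v})=\mu$; hence $\mu$ is $\boxtimes$-infinitely divisible.

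Next, $\operatorname{EXP}$ is a semi-group homomorphism — this is the linearisation assertion. The class $\operatorname{ID}_c(\R_+^*,\boxtimes)$ is stable under $\boxtimes$ by the computation of the preceding Lemma (if $\mu=\mu_n^{\boxtimes n}$ and $\nu=\nu_n^{\boxtimes n}$ then $\mu\boxtimes\nu=(\mu_n\boxtimes\nu_n)^{\boxtimes n}$), so for $v_1,v_2\in\mathcal{V}$, setting $\mu_i:=\operatorname{EXP}(v_i)$ we get $S_{\mu_1\boxtimes\mu_2}=S_{\mu_1}S_{\mu_2}=e^{v_1}e^{v_2}=e^{v_1+v_2}$ and hence $\operatorname{EXP}(v_1+v_2)=\mu_1\boxtimes\mu_2=\operatorname{EXP}(v_1)\boxtimes\operatorname{EXP}(v_2)$. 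For bijectivity: surjectivity is [\cite{VDN}, Theorem~$3.7.3$] read in reverse — given $\mu\in\operatorname{ID}_c(\R_+^*,\boxtimes)$, its $S$-transform is zero-free on the simply connected domain $(\C\setminus\R)\cup[-1,0]$, so $\log S_\mu$ is single-valued up to an additive constant in $2\pi i\Z$, and the branch that is real on $[-1,0]$ lies in $\mathcal{V}$ and satisfies $\operatorname{EXP}(\log S_\mu)=\mu$; injectivity holds because $S$ is injective and $e^{v_1}=e^{v_2}$ together with $v_1,v_2$ real-valued at $0$ forces $v_1=v_2$. A bijective homomorphism of semi-groups is an isomorphism, with inverse $\mu\mapsto\log S_\mu$.

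The step I expect to be the main obstacle is the precise matching — needed both for well-definedness and again for surjectivity — between the exponentials of the cones $\mathcal{V}$, $\mathcal{U}$ and the ranges of the $S$-transform on the respective $\boxtimes$-infinitely divisible classes: one has to know that exponentiating a function analytic on the prescribed domain with $\Im\leq 0$ on $\H$ (resp.\ $\Re\geq 0$ on $H_{-1/2}$) produces exactly the admissible $S$-transforms, and conversely, which is where [\cite{VDN}, Theorems~$3.7.3$, $3.7.4$] are doing the real work. The circle case carries an extra subtlety: since $\mathcal{U}$ has no reality constraint, $\log S_\mu$ on $H_{-1/2}$ is only determined modulo $2\pi i\Z$, so injectivity of $\operatorname{EXP}$ on $\mathcal{U}$ requires pinning this constant down by a normalisation (continuity together with the prescribed value of $S_\mu$ at the base point, which is governed by the first moment of $\mu$); one should check that the definition of $\mathcal{U}$ is to be read with this normalisation in force.
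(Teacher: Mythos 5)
Your argument is correct and is essentially the paper's: the paper's own proof consists only of the multiplicativity computation $S_{\mu\boxtimes\nu}=S_{\mu}\cdot S_{\nu}=e^{v+u}$, with [\cite{VDN}, Theorems~$3.7.3$, $3.7.4$] silently supplying exactly the well-definedness and surjectivity you spell out, and your $2\pi i$-normalisation remark for $\mathcal{U}$ is a legitimate subtlety the paper glosses over. One aside in your well-definedness step is false though harmless: $\Im v\leq 0$ on $\H$ controls only the argument, not the modulus, of $e^{v}$, so $e^{v}$ need not map $\H$ into the closed unit disc; you do not need this claim, since the cited theorem already identifies $\exp(\mathcal{V})$ with the $S$-transforms of elements of $\operatorname{ID}_c(\R_+^*,\boxtimes)$, and your $v/n$ cone argument independently recovers infinite divisibility.
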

\begin{proof}

$$
S_{\mu\boxtimes\nu}(z)=S_{\mu}\cdot S_{\nu}=e^{v(z)}\cdot e^{u(z)}=e^{v(z)+u(z)}.
$$
\end{proof}
As next we shall relate the free additive convolution on the real line with the free multiplicative convolution on the unit circle.
\begin{thm}
There exists a morphism of commutative semi-groups
$$
\operatorname{EXP}:(\operatorname{ID}_c(\R,\boxplus),\boxplus)\rightarrow (\operatorname{ID}^*(S^1,\boxtimes),\boxtimes)
$$
given by 
$$
\mu\mapsto S^{-1}(e^{-iR_{\mu}(i(z+1/2))}).
$$
\end{thm}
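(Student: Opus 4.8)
The plan is to compose the two linearisation results already established and to verify that the intermediate ``change of coordinates'' $R_\mu \mapsto e^{-iR_\mu(i(z+1/2))}$ lands in the right function space. Concretely, by the previous lemma the map $\operatorname{EXP}\colon(\mathcal{U},+)\to(\operatorname{ID}^*(S^1,\boxtimes),\boxtimes)$, $u\mapsto S^{-1}(e^{u(z)})$, is an isomorphism of commutative semi-groups, and by the lemma before that $R\colon(\operatorname{ID}_c(\R,\boxplus),\boxplus)\to(\mathcal{R}_{\infty,\boxplus}(\R),+)$ is an isomorphism of commutative monoids. So it suffices to exhibit a morphism of commutative monoids $\Phi\colon(\mathcal{R},+)\to(\mathcal{U},+)$ and then set $\operatorname{EXP}:=\operatorname{EXP}_{S^1}\circ\Phi\circ R$, which visibly sends $\mu$ to $S^{-1}(e^{\Phi(R_\mu)(z)})$. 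The candidate is $\Phi(R)(z):=-iR(i(z+1/2))$.

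First I would check that $\Phi$ is additive: since $R\mapsto R(i(z+1/2))$ is precomposition with a fixed holomorphic change of variable and $R\mapsto -iR$ is scalar multiplication, $\Phi(R_1+R_2)=\Phi(R_1)+\Phi(R_2)$ and $\Phi(0)=0$ are immediate. The substantive point is domain and positivity: I must show that if $R\in\mathcal{R}$ then $\Phi(R)\in\mathcal{U}$, i.e. that $z\mapsto -iR(i(z+1/2))$ is analytic on the shifted half-plane $H_{-1/2}=\{\Re(z)>-\tfrac12\}$ and satisfies $\Re(-iR(i(z+1/2)))\ge 0$ there. For the domain: if $\Re(z)>-\tfrac12$ then $w:=i(z+1/2)$ has $\Im(w)=\Re(z+\tfrac12)>0$, so $w\in\H$; since functions in $\mathcal{R}$ are analytic in a neighbourhood of $(\C\setminus\R)\cup\{0\}$, in particular on all of $\H$, the composite is analytic on $H_{-1/2}$. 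For the positivity: $\Re(-iR(w))=\Re(-i(\Re R(w)+i\Im R(w)))=\Im(R(w))$, and by the defining property \eqref{R_upper_half_plane} of $\mathcal{R}$ we have $\Im(R(w))\ge 0$ for $w\in\H$; hence $\Re(\Phi(R)(z))\ge 0$ on $H_{-1/2}$, as required. (The symmetry condition $R(\bar z)=\overline{R(z)}$ plays no role here because $\mathcal{U}$ carries no reality constraint, only the half-plane positivity.)

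Putting the pieces together: $\operatorname{EXP}=\operatorname{EXP}_{S^1}\circ\Phi\circ R$ is a composite of three morphisms of commutative semi-groups, hence itself a morphism of commutative semi-groups, and unwinding the definitions gives exactly $\mu\mapsto S^{-1}(e^{-iR_\mu(i(z+1/2))})$. It is worth remarking that, unlike the statement of Lemma on the holomorphic linearisation of $\boxtimes$, one should not expect $\operatorname{EXP}$ here to be an isomorphism: $R$ and $\operatorname{EXP}_{S^1}$ are isomorphisms, so surjectivity/injectivity of $\operatorname{EXP}$ is exactly surjectivity/injectivity of $\Phi$, and $\Phi$ need not be onto $\mathcal{U}$ — the image consists of those $u\in\mathcal{U}$ of the form $u(z)=-iR(i(z+1/2))$ with $R$ additionally satisfying the reflection symmetry and extending analytically across $\{0\}$ to a neighbourhood of $(\C\setminus\R)\cup\{0\}$, which is a proper subclass. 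So the theorem claims only a morphism, consistent with the fact that $\operatorname{ID}^*(S^1,\boxtimes)$ is genuinely ``larger'' than the real line picture.

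The main obstacle, such as it is, is bookkeeping rather than mathematics: one has to be careful that the germ/sheaf distinctions in the definitions of $\mathcal{R}$, $\mathcal{V}$, $\mathcal{U}$ are respected — $\mathcal{R}$ and $\mathcal{V}$ are defined as stalks under a direct limit over shrinking neighbourhoods, whereas $\mathcal{U}$ is a genuine sheaf of functions on the fixed domain $H_{-1/2}$ — so the composition $\Phi$ must be checked to be well defined on representatives and independent of the choice of germ representative. This is routine once one notes that any representative of a germ in $\mathcal{R}$ is, by definition, analytic on a set containing $\H$, and the change of variable $z\mapsto i(z+1/2)$ maps $H_{-1/2}$ into $\H$, so the composite is honestly defined on all of $H_{-1/2}$ with no shrinking needed. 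The only other point to keep in mind is that one is implicitly using $S^{-1}$ on the range $e^{\mathcal{U}}$, which is exactly the content of the previously established isomorphism $\operatorname{EXP}_{S^1}$, so no new inversion argument is needed.
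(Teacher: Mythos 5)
Your proposal is correct and follows essentially the same route as the paper: the paper likewise defines the conformal change of variable $z\mapsto i(z+\tfrac12)$, checks $\Re\bigl(-iR_\mu(i(z+\tfrac12))\bigr)=\Im\bigl(R_\mu(i(z+\tfrac12))\bigr)\ge 0$ on $H_{-1/2}$ so that the image lies in $\mathcal{U}$, and then gets the semi-group property from $R_\mu+R_\nu=R_{\mu\boxplus\nu}$, citing the characterisation of $S$-transforms of $\boxtimes$-infinitely divisible measures on $S^1$ exactly as your use of the preceding lemmas does. Your packaging as the composite $\operatorname{EXP}_{S^1}\circ\Phi\circ R$ and the remark on non-surjectivity are harmless refinements of the same argument.
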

\begin{proof}
Let $\varphi:H_{-1/2}\rightarrow\H$ be the conformal map $z\mapsto\varphi(z):=i(z+\frac{1}{2})$. It induces an injective map $\hat{\varphi}:\mathcal{R}\rightarrow\mathcal{U}$, which associates to $R_{\mu}\in \mathcal{R}_{\infty,\boxplus}(\R)$, the holomorphic map 
$$
u(z):=-iR_{\mu}(i(z+\frac{1}{2})).
$$
Indeed, $u(z)\in\mathcal{U}$, as for $\Re(z)>-\frac{1}{2}$ we have $\Im(i(z+\frac{1}{2}))>0$, and so, by assumption, $\Im(R_{\mu}(i(z+\frac{1}{2})))\geq0$, from which we obtain that 
$$
\Re(-iR_{\mu}(i(z+\frac{1}{2})))=\Im(R_{\mu}(i(z+\frac{1}{2})))\geq 0.
$$ 
Now, from~[\cite{VDN}~Theorem~$3.7.4$] the first claim follows.

For $\delta_0$ we have $R_{\delta_0}(z)\equiv0$ which is mapped onto $1$ and hence corresponds to $\delta_{e^0}$.  
Next, for $\mu,\nu\in M_{\infty,\boxplus}(\R)$ we have $R_{\mu}+R_{\nu}=R_{\mu\boxplus\nu}$, and further
$$
\left(-iR_{\mu}(i(z+\frac{1}{2}))\right)+\left(-iR_{\nu}(i(z+\frac{1}{2}))\right)=-i(R_{\mu}+R_{\nu})\circ\varphi(z)=-i(R_{\mu\boxplus\nu})\circ\varphi(z),
$$
from which the property of a semi-group morphism follows.
\end{proof}
We have the following examples: 
\begin{itemize}
\item For $R(z)=bz$, i.e. the semi-circular law of radius $2\sqrt{b}$ and centred at the origin, we get $z\mapsto e^{bz+\frac{b}{2}}$, i.e. the {\bf free Brownian motion}.
\item For $R(z)=a$, i.e. the {\bf Dirac measure} $\delta_a$ at $a$, we get 
$a\mapsto e^{-ia}$.
\end{itemize}

The {\bf free Poisson distribution (fP) with rate $\lambda\geq0$ and jump size $\alpha\in\R$}, cf. e.g.~[\cite{NS}, Prop. 12.11, or \cite{VDN} p. 34],  is the limit in distribution for $N\rightarrow\infty$ of 
$$ 
\nu_{N,\lambda,\alpha}:=\left(\left(1-\frac{\lambda}{N}\right)\delta_0+\frac{\lambda}{N}\delta_{\alpha}\right)^{\boxplus N},
$$ 
with $\boxplus N$ in the exponent denoting the $N$-fold free additive self-convolution.

The ${R}$-transform of the limit $\nu_{\infty,\lambda,\alpha}:=\lim_{N\to\infty}\nu_{N,\lambda,\alpha}$ is
\begin{equation}
\label{inf_Poisson}
{R}_{\nu_{\infty,\lambda,\alpha}}(z)=\lambda\alpha\frac{1}{1-\alpha z}=\sum_{n=0}^{\infty}\lambda\alpha^{n+1}z^n=\lambda\alpha+\lambda\alpha^2 z+\lambda\alpha^3z^2+\cdots.
\end{equation}

Let $\mathcal{R}_{[-1,0]}(\R)\subset\mathcal{R}_{\infty,\boxplus}(\R)$ be the subset of stalks of $R$-transforms which are also analytic around $[-1,0]$ (direct limit set). Then $\mathcal{R}_{[-1,0]}(\R)$ is not empty, as it contains, e.g. the
\begin{itemize}
\item Dirac delta $\delta_a$,  $a\in\R$,
\item (Centred) semi-circular $a+bz$, $a\in\R$, $b>0$,
\item Free Poisson $\lambda\alpha/(1-\alpha z)$, $\lambda\in\R_+$ and $|\alpha|<1$.
\end{itemize}

The next result seems not to have been considered in the literature to this extent so far. Namely, we shall relate the free additive convolution on $\R$ with the free multiplicative convolution on $\R_+^*$.  However, the complex analytic machinery for compactly supported and freely infinitely divisible measures, only provides a statement which applies to a, nevertheless nontrivial, subset of the additive infinitely divisible measures.  But the expression we obtain suggests that there is an inherent difference concerning the relation between the two convolutions for these domains.

\begin{prop}
There exists a monomorphism of abelian semi-groups 
$$
\operatorname{EXP}: (\operatorname{ID}_c(\R,\boxplus)|_{\mathcal{R}_{[-1,0]}},\boxplus)\rightarrow (\operatorname{ID}_c(\R_+^*,\boxtimes),\boxtimes),
$$
given by $\mu\mapsto S^{-1}(e^{-R_{\mu}(z)})$.

\end{prop}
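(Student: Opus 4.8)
The plan is to mirror the proof of the preceding theorem (the one mapping $\operatorname{ID}_c(\R,\boxplus)$ into $\operatorname{ID}^*(S^1,\boxtimes)$), but now using the conformal-analytic data attached to $\mathcal{V}_{\infty,\boxtimes}(\R^*_+)$ in place of $\mathcal{U}_{\infty,\boxtimes}(S^1)$, and carefully tracking the extra analyticity requirement around $[-1,0]$ that distinguishes $\mathcal{R}_{[-1,0]}(\R)$ from $\mathcal{R}_{\infty,\boxplus}(\R)$. First I would make explicit the map on the level of germs: to $R_\mu\in\mathcal{R}_{[-1,0]}(\R)$ associate the function $v(z):=-R_\mu(z)$. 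The key structural claim is that $v\in\mathcal{V}_{\infty,\boxtimes}(\R^*_+)$. For the domain condition, $R_\mu$ is analytic in a neighbourhood of $(\C\setminus\R)\cup\{0\}$ by definition of $\mathcal{R}_{\infty,\boxplus}(\R)$, and analytic around $[-1,0]$ by the hypothesis $R_\mu\in\mathcal{R}_{[-1,0]}$; together these give analyticity in a neighbourhood of $(\C\setminus\R)\cup[-1,0]$, which is exactly what membership in $\mathcal{V}$ requires. For the positivity/symmetry conditions, from \eqref{R_upper_half_plane} we have $R_\mu(\bar z)=\overline{R_\mu(z)}$ and $\Im(R_\mu(z))\geq 0$ on $\H$; hence $v(\bar z)=-R_\mu(\bar z)=-\overline{R_\mu(z)}=\overline{-R_\mu(z)}=\overline{v(z)}$ and $\Im(v(z))=-\Im(R_\mu(z))\leq 0$ on $\H$. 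Thus $v\in\mathcal{V}$, and $S^{-1}(e^{v(z)})=S^{-1}(e^{-R_\mu(z)})$ lies in $\operatorname{ID}_c(\R_+^*,\boxtimes)$ by the holomorphic linearisation lemma for $\boxtimes$ (part (1)).

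Next I would verify that this is a semi-group morphism. This is purely formal once the previous step is in place: the assignment $\mu\mapsto v=-R_\mu$ is additive because $R_{\mu\boxplus\nu}=R_\mu+R_\nu$, so $-R_{\mu\boxplus\nu}=(-R_\mu)+(-R_\nu)$; composing with the semi-group isomorphism $\operatorname{EXP}:(\mathcal{V},+)\to(\operatorname{ID}_c(\R_+^*,\boxtimes),\boxtimes)$ from the linearisation lemma turns this additivity into multiplicativity with respect to $\boxtimes$, i.e. $\operatorname{EXP}(\mu\boxplus\nu)=\operatorname{EXP}(\mu)\boxtimes\operatorname{EXP}(\nu)$. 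The neutral element $\delta_0$ has $R_{\delta_0}\equiv 0$, which maps to the constant germ $0\in\mathcal{V}$, hence to $S^{-1}(1)$, the neutral element of $(\operatorname{ID}_c(\R_+^*,\boxtimes),\boxtimes)$; so the morphism is unital. One should also record that $\mathcal{R}_{[-1,0]}(\R)$ is closed under addition, so that the restricted domain $\operatorname{ID}_c(\R,\boxplus)|_{\mathcal{R}_{[-1,0]}}$ is genuinely a sub-semi-group of $(\operatorname{ID}_c(\R,\boxplus),\boxplus)$; this is immediate since a sum of two functions each analytic around $[-1,0]$ is again analytic there, and the half-plane conditions are preserved by Lemma~\ref{convex_cones}.

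Finally, injectivity. The composite is $\mu\mapsto R_\mu\mapsto -R_\mu\mapsto S^{-1}(e^{-R_\mu})$. The first arrow is injective because the $R$-transform determines the measure (indeed $R$ is an isomorphism onto $\mathcal{R}_{\infty,\boxplus}(\R)$ by the earlier lemma); the second arrow $R\mapsto -R$ is a bijection; the third arrow $v\mapsto S^{-1}(e^{v})$ is injective because $v\mapsto e^v$ is injective on germs (two germs with the same exponential differ by a locally constant multiple of $2\pi i$, but both vanish appropriately near $0$, forcing equality) and $S^{-1}$ is a bijection by the linearisation lemma. Hence the composite is a monomorphism. I do not expect a serious obstacle here; the one point that needs genuine care — and is really the whole reason the statement is phrased with the restricted domain — is the domain-of-analyticity bookkeeping in the first step, namely confirming that the interval $[-1,0]$ required for $\mathcal{V}$ is supplied precisely by the extra hypothesis $R_\mu\in\mathcal{R}_{[-1,0]}$ and is \emph{not} automatic for a general element of $\mathcal{R}_{\infty,\boxplus}(\R)$; I would make sure to comment on why the free Poisson example with $|\alpha|<1$ illustrates this and why $|\alpha|\geq 1$ (a pole hitting $[-1,0]$) would fail, which is the "inherent difference" alluded to before the statement.
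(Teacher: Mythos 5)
Your proposal is correct and follows essentially the same route as the paper: map $R_\mu\mapsto -R_\mu$, check via \eqref{R_upper_half_plane} that the result satisfies the symmetry and non-positive-imaginary-part conditions (and, thanks to the restriction to $\mathcal{R}_{[-1,0]}$, the analyticity around $[-1,0]$) defining $\mathcal{V}$, invoke the characterisation behind the linearisation lemma ([\cite{VDN} Theorem 3.7.3]), and get the morphism property from $e^{-(R_\mu+R_\nu)}=e^{-R_\mu}\cdot e^{-R_\nu}$; your extra remarks on closure of $\mathcal{R}_{[-1,0]}$ under addition and on injectivity only make explicit what the paper leaves terse. One tiny repair: in the injectivity step the germs $-R_\mu$ need not vanish near $0$ (indeed $-R_\mu(0)=-\kappa_1(\mu)$); the $2\pi i k$ ambiguity is instead killed because the condition $v(\bar z)=\overline{v(z)}$ forces the germs to be real on the real axis near $[-1,0]$, so $k=0$.
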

\begin{proof}
For $\delta_0$ we have $R_{\delta_0}(z)\equiv 0$ which maps to $e^0=1$, corresponding to $S_{\delta_{e^0}}(z)$.
Define a map $\mathcal{R}_{[-1,0]}\rightarrow\mathcal{V}$ by $R_{\mu}\mapsto-R_{\mu}$, which is injective. Then, for $v(z):=-R_{\mu}(z)$, we have
$$
v(\bar{z})=-R_{\mu}(\bar{z})\underbrace{=}_{\text{by $1.$}}-\overline{R_{\mu}(z)}=\overline{-R_{\mu}(z)}=\overline{v(z)},
$$ 
and for $\Im(z)>0$ we have $\Im(-R(z))=-\Im(R(z))\leq0$. Then from~[\cite{VDN}~Theorem~$3.7.3$] the first part of the proposition follows. The map $\operatorname{EXP}$ being a morphism of semi-groups now follows from  $\exp(-(R_{\mu}+R_{\nu}))=\exp(-R_{\mu\boxplus\nu})=\exp(-R_{\mu})\cdot\exp(-R_{\nu})$.
\end{proof}
\subsection{The Witt semi-ring $\boxplus$, $\boxdot$}

In order to establish in the category of probability measures an analogous structure to the ring of Witt vectors, which additionally respects positivity, we have to adapt first some of the characterisations of infinitely divisible probability measures in terms of cumulants, as described in~[\cite{NS} Lecture 13]. 

So let us recall, that a complex sequence $(s_1,s_2,s_3,\dots)$ is called {\bf conditionally positive definite} if the shifted sequence $(s_2,s_3,\dots)$ is positive definite, cf.~[\cite{NS}, Notation 13.10]. If $(s_0,s_1,s_2,\dots)$ is a positive definite sequence then the truncated sequence $(s_1,s_2,s_3,\dots)$ is conditionally positive definite. 
Namely, for all $n\in\N$, and all $a_0,\dots, a_n\in\C$, we have, by assumption, that
$$
\sum_{i,j=0}^n a_i\bar{a}_j s_{i+j}\geq0.
$$
Let $a_0\equiv0$. Then for $n\geq1$ and for all $a_1,\dots,a_n\in\C$, we have 
$$
0\leq\sum_{i,j=0}^n a_i\bar{a}_j s_{i+j}=\sum_{i,j=1}^n a_i\bar{a}_j s_{i+j},
$$
as we are restricting to the sub-matrix where one deletes the first row and first column. This is a particular case of the general fact that any principal sub-matrix of a positive definite matrix is again positive definite.

\begin{prop}
\label{Char_infinitely_div}
Let $({s_n})_{n\in\N^*}$, $s_n\in\R$, be a sequence of real numbers. Then the following  statements are equivalent.
\begin{enumerate}
\item The sequence $({s_n})_{n\in\N^*}$ is conditionally positive definite and exponentially bounded, i.e. there exists a $C>0$ such that $|s_n|\leq C^n$ for all $n\in\N^*$.
\item There exists a $\mu\in \operatorname{ID}_c(\R,\boxplus)$ such that $\kappa_n(\mu)=s_n$, $n\in\N^*$, i.e. the real sequence is given by the free cumulants of a freely, infinitely divisible and compactly supported probability measure.
\item There exists a unique pair $(\gamma,\rho)$ with $\gamma\in\R$ and $\rho\in \operatorname{Meas}_c(\R)$ such that
$$
\sum_{n=0}^{\infty} s_{n+1}z^n=\gamma+\int_{\R}\frac{z}{1-xz}d\rho(x).
$$
\end{enumerate}
\end{prop}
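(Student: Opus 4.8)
The plan is to prove the equivalence by establishing the cycle $(2)\Rightarrow(1)\Rightarrow(3)\Rightarrow(2)$, invoking the Lévy–Khintchine-type representation for free infinite divisibility from [\cite{VDN}, Section 3.7] and the combinatorial characterization of $\boxplus$-infinite divisibility via conditional positive definiteness from [\cite{NS}, Lecture 13].

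First I would treat $(2)\Rightarrow(1)$. If $\mu\in\operatorname{ID}_c(\R,\boxplus)$ with $\kappa_n(\mu)=s_n$, then by [\cite{NS}, Theorem 13.16] the free cumulant sequence of an $\boxplus$-infinitely divisible measure is conditionally positive definite, which gives the first half of $(1)$. For exponential boundedness, since $\mu$ has compact support, say $\operatorname{supp}(\mu)\subset[-r,r]$, the moments satisfy $|m_n(\mu)|\leq r^n$; the free cumulants are polynomials in the moments with the moment-cumulant formula being governed by non-crossing partitions, and a standard estimate (e.g.\ via the generating-function identity $R$ analytic near $0$, equivalently via Speicher's recursion) yields $|\kappa_n(\mu)|\leq C^n$ for some $C>0$ depending only on $r$. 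I would phrase this via the radius of convergence of the $R$-transform: compact support forces $R_\mu$ analytic in a neighbourhood of $0$, hence its Taylor coefficients $\kappa_{n+1}$ grow at most geometrically.

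Next, $(1)\Rightarrow(3)$. Given $(s_n)_{n\in\N^*}$ conditionally positive definite, the shifted sequence $(s_2,s_3,\dots)$ is positive definite, so by the (classical) Hamburger moment problem there is a positive finite Borel measure $\tilde\rho$ with $\int x^n\,d\tilde\rho = s_{n+2}$ for $n\geq 0$; exponential boundedness $|s_n|\leq C^n$ forces $\operatorname{supp}(\tilde\rho)\subset[-C,C]$, so $\tilde\rho\in\operatorname{Meas}_c(\R)$, and after pulling out a possible atom at $0$ one obtains the measure $\rho$ and the constant term. Matching coefficients: set $\gamma:=s_1$ (the $z^0$-coefficient) minus the contribution of $\rho$'s zeroth moment if any; more cleanly, write $\rho$ for the measure with $\int x^{n}\,d\rho = s_{n+2}$ for $n\geq 0$ and $\int 1\, d\rho = s_2$, and check that $\int_\R \frac{z}{1-xz}\,d\rho(x) = \sum_{k\geq 0} z^{k+1}\int x^k d\rho = \sum_{k\geq 0}s_{k+2}z^{k+1} = \sum_{n\geq 1}s_{n+1}z^n$, so that adding $\gamma=s_1$ reproduces $\sum_{n\geq 0}s_{n+1}z^n$. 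Uniqueness of $(\gamma,\rho)$ follows because $\gamma$ is forced as the value at $z=0$ and $\rho$ is then forced by the moment problem together with compact support (determinacy is automatic for compactly supported measures).

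Finally, $(3)\Rightarrow(2)$. The representation $\sum_{n\geq 0}s_{n+1}z^n = \gamma + \int_\R \frac{z}{1-xz}\,d\rho(x)$ is exactly the free Lévy–Khintchine form of an $R$-transform: set $R(z):=\gamma + \int_\R \frac{z}{1-xz}\,d\rho(x)$, which by [\cite{VDN}, Theorem 3.7.2/3.7.3] (the same characterization used in the Proposition and Theorem above) defines an element of $\mathcal{R}_{\infty,\boxplus}(\R)$, hence via the isomorphism $R^{-1}$ of the preceding Lemma corresponds to a measure $\mu\in\operatorname{ID}_c(\R,\boxplus)$; compactness of $\operatorname{supp}(\rho)$ guarantees $R$ is analytic near $0$ and the resulting $\mu$ is compactly supported. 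Its free cumulants are then read off as $\kappa_n(\mu)=s_n$ by construction.

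The main obstacle I anticipate is the quantitative bookkeeping in $(2)\Rightarrow(1)$, namely turning ``compact support'' into the explicit exponential bound $|s_n|\leq C^n$ on the free cumulants, and conversely in $(1)\Rightarrow(3)$ making sure that the constant $C$ controlling $|s_n|$ really does bound the support of $\rho$ so that $\rho\in\operatorname{Meas}_c(\R)$ rather than merely a finite measure with unbounded support; both hinge on the equivalence between geometric growth of Taylor coefficients and analyticity of $R$ near the origin, which I would state carefully once and reuse. The positive-definiteness $\leftrightarrow$ Hankel-matrix $\leftrightarrow$ existence-of-representing-measure step is classical and I would cite it rather than reprove it.
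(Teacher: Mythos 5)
Your proposal is correct and follows essentially the same route as the paper: both hinge on the integral representation with $\gamma=s_1$ and $m_k(\rho)=s_{k+2}$ (the content of [NS, Prop.\ 13.14], which you re-derive directly via the Hamburger moment problem plus the growth bound forcing compact support) and then produce $\mu$ from it via the analytic characterisation in [VDN, Thm.\ 3.7.3], exactly as in the paper's $1.\Rightarrow 2.$ step. The only point you assert rather than verify is that the integral form indeed satisfies the VDN conditions, namely $R(\bar z)=\overline{R(z)}$ and $\Im R(z)\geq 0$ for $z\in\H$ together with analyticity off $\R$ and near $0$; this short computation (splitting $\tfrac{z}{1-xz}$ into real and imaginary parts) is the analytic core that the paper writes out explicitly, so you should include it rather than subsume it under the phrase ``free L\'evy--Khintchine form''.
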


\begin{proof}
$2.\Rightarrow 1.:$ This follows from~[\cite{NS} Theorem 13.6 and Proposition 13.15].

$1.\Rightarrow 3.:$ This follows from~[\cite{NS} Proposition 13.14].

$3.\Rightarrow 1.:$ This follows from~[\cite{NS} Lemmas 13.13 and 13.14].

$1.\Rightarrow 2.:$
By~[\cite{NS} Proposition 13.14], there exists a finite, compactly supported, Borel measure $\rho$ on $\R$ such that 
\begin{equation}
\label{generating_function}
R(z):=\sum_{n=0}^{\infty} s_{n+1}z^n=s_1+\int_{\R}\frac{z}{1-xz}d\rho(x).  
\end{equation}

\item We show that $R(z)$ in~(\ref{generating_function}) satisfies the requirements of [\cite{VDN}~Theorem 3.7.3,~(2)].  
\begin{itemize}
\item
By using a version of the ``Differentiation Lemma", cf. e.g.~[\cite{Kle} Theorem 6.28], it follows that $R(z)$ is analytic in $\H$ and $-\H$ and in a neighbourhood of $0$. The statement for the first two regions follows directly from the integral representation. 

As the support of $\rho$ is bounded, there exists an $A>0$ such that $\supp(\rho)\subset[-A,A]$. Let $|z|<C_{\rho}:=\min(1,1/A)$. Then $|zx|<1$ for all $x\in\supp(\rho)$, and hence $R(z)$ is analytic in a disc of radius $C_{\rho}$ centred at the origin.
\item Positive imaginary part. We split the integral into real and imaginary parts. For $z:=a+ib$, we have  
$$
\frac{z}{1-xz}=\underbrace{\frac{a-x(a^2+b^2)}{(1-ax)^2+x^2b^2}}_{=:u(x)}+i\underbrace{\frac{b}{(1-ax)^2+x^2b^2}}_{=:v(x)}=u(x)+iv(x).
$$
Then 
$$
R(\overline{z})=s_1+\int_{\R} u(x)d\rho(x)-i\int_{\R}v(x)d\rho(x)=\overline{R(z)}
$$
\item For all $x\in\R$ and $b>0$, we have $v(x)>0$, and so for $z\in\H$,
$$
\Im(R(z))=\int_{\R}v(x)d\rho(x)\geq0.
$$
\end{itemize}
Hence, there exists a $\mu\in\operatorname{ID}(\R,\boxplus)$ with $R_{\mu}(z)=R(z)$.
\end{proof}

\begin{lem}[Shifted cumulants]
\label{shifted_cummulants}
Let $\mu\in \operatorname{ID}_c(\R,\boxplus)$ and $\kappa({\mu})=(\kappa_n(\mu))_{n\in\N^*}$ its free cumulant sequence. Then there exists a $\nu\in \operatorname{ID}_c(\R,\boxplus)$ such that 
$
\kappa_n({\nu})=\kappa_{n+2}({\mu}),
$ for all $n\in\N^*$.
\end{lem}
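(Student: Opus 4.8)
The plan is to route the argument through the free L\'evy--Khintchine-type representation of Proposition~\ref{Char_infinitely_div}, which turns the statement into an elementary manipulation of the free L\'evy measure. Since $\mu\in\operatorname{ID}_c(\R,\boxplus)$, the implication $2.\Rightarrow 3.$ of Proposition~\ref{Char_infinitely_div}, applied to the sequence $s_n:=\kappa_n(\mu)$, produces a unique pair $(\gamma,\rho)$ with $\gamma\in\R$ and $\rho\in\operatorname{Meas}_c(\R)$ such that
$$
\sum_{n=0}^{\infty}\kappa_{n+1}(\mu)z^n=\gamma+\int_{\R}\frac{z}{1-xz}d\rho(x).
$$
Expanding $\frac{z}{1-xz}=\sum_{m\geq0}x^m z^{m+1}$ (uniformly for $z$ in a neighbourhood of $0$, as $\supp(\rho)$ is compact) and comparing coefficients gives $\kappa_1(\mu)=\gamma$ and, crucially,
$$
\kappa_{n+2}(\mu)=\int_{\R}x^n\,d\rho(x)\qquad\text{for all }n\in\N .
$$
In particular $\kappa_3(\mu)=\int_{\R}x\,d\rho(x)$.

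Next I would guess the representation required by condition $3.$ for the shifted sequence $t_n:=\kappa_{n+2}(\mu)$, $n\in\N^*$. Set
$$
\rho':=x^2\rho\qquad(\text{i.e. }d\rho'(x)=x^2\,d\rho(x)),\qquad \gamma':=\kappa_3(\mu).
$$
Since $\supp(\rho)$ is compact and $\rho$ finite, $\rho'$ is again a finite, compactly supported Borel measure, hence $\rho'\in\operatorname{Meas}_c(\R)$; positivity is preserved precisely because $x^2\geq0$. The verification then reduces to the identity
\begin{align*}
\sum_{n=0}^{\infty}t_{n+1}z^n&=\sum_{n=0}^{\infty}\kappa_{n+3}(\mu)z^n=\int_{\R}\frac{x}{1-xz}d\rho(x)\\
&=\int_{\R}x\,d\rho(x)+\int_{\R}\frac{z\,x^2}{1-xz}d\rho(x)=\gamma'+\int_{\R}\frac{z}{1-xz}d\rho'(x),
\end{align*}
where the first two equalities use $\kappa_{n+3}(\mu)=\int_{\R}x^{n+1}d\rho(x)$ and the third uses $\frac{x}{1-xz}=x+\frac{zx^2}{1-xz}$. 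Thus $(t_n)_{n\in\N^*}$ satisfies condition $3.$ of Proposition~\ref{Char_infinitely_div}.

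Finally, the implication $3.\Rightarrow 2.$ of that proposition yields a measure $\nu\in\operatorname{ID}_c(\R,\boxplus)$ with $\kappa_n(\nu)=t_n=\kappa_{n+2}(\mu)$ for all $n\in\N^*$, which is the assertion. (Alternatively one may argue via condition $1.$: exponential boundedness of $(\kappa_n(\mu))$ transfers trivially to $(\kappa_{n+2}(\mu))$, and conditional positive definiteness of $(\kappa_{n+2}(\mu))_{n\in\N^*}$ is the positive definiteness of $(\kappa_{n+4}(\mu))_{n\geq0}=\big(\int_{\R}x^{n+2}d\rho(x)\big)_{n\geq0}$, i.e.\ of the moment sequence of the positive measure $x^2\rho$; routing through condition $3.$ merely spares one from re-deriving positive definiteness by hand.) I do not foresee a genuine obstacle: the only points requiring care are the index bookkeeping and the observation that a degree shift by two in the cumulants corresponds exactly to the modification $\rho\mapsto x^2\rho$ of the free L\'evy measure, which --- in contrast to a shift by one --- keeps the measure positive and hence inside $\operatorname{Meas}_c(\R)$.
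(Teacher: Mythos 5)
Your argument is correct, and it reaches the conclusion by a slightly different path through Proposition~\ref{Char_infinitely_div} than the paper does. The paper's proof is the one you relegate to your parenthetical: it defines $s_n:=\kappa_{n+2}(\mu)$, observes that this sequence is conditionally positive definite (this is exactly the principal-submatrix remark made just before the proposition, since conditional positive definiteness of the shifted sequence amounts to positive semidefiniteness of the Hankel matrices $(\kappa_{4+i+j}(\mu))_{i,j}$, which are principal submatrices of $(\kappa_{2+i+j}(\mu))_{i,j}$) and exponentially bounded, and then invokes the implication $1.\Rightarrow 2.$ Your main route instead goes $2.\Rightarrow 3.\Rightarrow 2.$: you extract the pair $(\gamma,\rho)$, read off $\kappa_{n+2}(\mu)=\int_\R x^n\,d\rho(x)$, and verify that the twice-shifted sequence is represented by the pair $(\kappa_3(\mu),\,x^2\rho)$ via the identity $\frac{x}{1-xz}=x+\frac{zx^2}{1-xz}$; all steps (interchange of sum and integral near $0$ by compactness of $\supp(\rho)$, positivity and finiteness of $x^2\rho$) check out. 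What your version buys is an explicit description of the free L\'evy pair of the shifted measure $\hat{\partial}^2(\mu)$, namely $\rho\mapsto x^2\rho$ and $\gamma\mapsto\kappa_3(\mu)=\int_\R x\,d\rho(x)$, which the paper's one-line proof does not provide and which meshes nicely with the later discussion of the d\'ecalage operator (e.g. it recovers $\hat{\partial}^2(\nu_{\infty,\lambda,\alpha})=\nu_{\infty,\lambda\alpha^2,\alpha}$); the paper's route is shorter and avoids the Laurent-type manipulation, at the cost of leaving the positivity bookkeeping implicit in the earlier remark.
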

\begin{proof}
Define a sequence $(s_n)_{n\in\N^*}$ with $s_n:=\kappa({\mu})_{n+2}$ for $n\in\N^*$ which is conditionally positive definite and exponentially bounded. 
Then the claim follows from Proposition~\ref{Char_infinitely_div}.  
\end{proof}

For two power series, the point-wise or Hadamard multiplication, $\cdot_H$, is defined as:
$$
\left(\sum_{n=0}^{\infty} a_n z^n\right)\cdot_H \left(\sum_{n=0}^{\infty} b_n z^n\right):=\sum_{n=0}^{\infty} a_n  b_b z^n~.
$$

The $R$-transform is a monomorphism $R:M(\R)\rightarrow \R[[z]]$ which is given by $\mu\mapsto R_{\mu}(z)$. For $\mu,\nu\in\M_c(\R)$, we define a binary operation $\boxdot$, as follows: 
\begin{equation}
\label{boxdot}
\mu\boxdot\nu:=R^{-1}(R_{\mu}(z)\cdot_H R_{\nu}(z)),
\end{equation}
where $R^{-1}$ is the inverse of the $R$-transform. Now, the following statement establishes the analytic counterpart to the algebraic statement in~\cite{FMcK2012}, and as we noted in the Introduction, the constraint given by positivity will restrict the full structure, i.e. we shall have
$$
\text{ring~$\rightarrow$~semi-ring}.
$$

\begin{thm}
$(\operatorname{ID}_c(\R,\boxplus),\boxplus,\boxdot,\delta_0, \nu_{\infty,1,1})$ is a commutative semi-ring with the multiplicative unit given by the free Poisson distribution $\nu_{\infty,1,1}$.  
\end{thm}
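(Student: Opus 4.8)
The plan is to verify the semi-ring axioms by transporting the whole structure via the $R$-transform, which by the earlier Lemma is an isomorphism of commutative monoids $R:(\operatorname{ID}_c(\R,\boxplus),\boxplus)\to(\mathcal{R}_{\infty,\boxplus}(\R),+)$. Under this identification, $\boxplus$ becomes $+$ on power series and $\boxdot$ becomes the Hadamard product $\cdot_H$; the unit $\delta_0$ becomes $0$, and the free Poisson $\nu_{\infty,1,1}$ becomes $R(z)=1/(1-z)=\sum_{n\ge0}z^n$ by formula~(\ref{inf_Poisson}) with $\lambda=\alpha=1$. So everything reduces to two independent tasks: (i) check the purely algebraic semi-ring identities for $(+,\cdot_H)$ on formal power series with $\sum z^n$ as multiplicative unit, and (ii) — the substantive point — check that $\mathcal{R}_{\infty,\boxplus}(\R)$ is actually \emph{closed} under $\cdot_H$, so that $\boxdot$ really maps back into $\operatorname{ID}_c(\R,\boxplus)$ and the operation is well defined on the claimed domain.

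For task (i): commutativity, associativity and the distributivity of $\cdot_H$ over $+$ are coefficientwise identities, immediate from $(a_nb_n)=(b_na_n)$, $((a_nb_n)c_n)=(a_n(b_nc_n))$ and $a_n(b_n+c_n)=a_nb_n+a_nc_n$. The series $\sum_n z^n$ is the Hadamard identity since its coefficients are all $1$. Absorption of $0$ under $\cdot_H$ is clear. Note one should phrase the statement as a semi-ring \emph{without} requiring that $0$ annihilate — actually here it does: $0\cdot_H f=0$ — but the key asymmetry with a genuine ring is the absence of additive inverses, since $\Im(R(z))\ge0$ forces $-R\notin\mathcal{R}$ in general; this is exactly the \emph{ring $\to$ semi-ring} phenomenon flagged before the theorem, and I would remark on it rather than prove anything.

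For task (ii), the main obstacle: given $\mu,\nu\in\operatorname{ID}_c(\R,\boxplus)$ I must show $R_\mu\cdot_H R_\nu$ is again the $R$-transform of something in $\operatorname{ID}_c(\R,\boxplus)$. The natural route is Proposition~\ref{Char_infinitely_div}: writing $s_n:=\kappa_n(\mu)$, $t_n:=\kappa_n(\nu)$, the coefficients of $R_\mu\cdot_H R_\nu$ are $(s_nt_n)_{n\ge1}$, and I need this sequence to be conditionally positive definite and exponentially bounded. Exponential boundedness is trivial: $|s_n|\le C^n$, $|t_n|\le D^n$ give $|s_nt_n|\le(CD)^n$. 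Conditional positive definiteness means the shifted sequence $(s_2t_2,s_3t_3,\dots)$ is positive definite, i.e. the Hadamard (Schur) product of the two positive definite Hankel-type matrices $(s_{i+j})_{i,j\ge1}$ and $(t_{i+j})_{i,j\ge1}$ is positive definite — this is precisely the Schur product theorem, that the entrywise product of two positive semidefinite matrices is positive semidefinite. Thus $(s_nt_n)_{n\ge1}$ satisfies condition $1.$ of Proposition~\ref{Char_infinitely_div}, hence by $1.\Rightarrow2.$ there is a (unique, by injectivity of $R$) measure $\mu\boxdot\nu\in\operatorname{ID}_c(\R,\boxplus)$ with those free cumulants. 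This closure is the only place where infinite divisibility (as opposed to working in all of $M_c(\R)$) is genuinely used, and it is what makes the theorem true on this domain. The remaining bookkeeping — that the transported unit is $\nu_{\infty,1,1}$ and that the isomorphism intertwines the operations — then assembles into the semi-ring claim.
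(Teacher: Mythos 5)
Your proposal is correct and follows essentially the same route as the paper's proof: closure of $\operatorname{ID}_c(\R,\boxplus)$ under $\boxdot$ via the Schur product theorem applied to the shifted (Hankel) cumulant matrices plus the trivial exponential bound, then Proposition~\ref{Char_infinitely_div} ($1.\Rightarrow 2.$), with the unit identified as the free Poisson cumulant sequence $(1,1,1,\dots)$ and associativity/distributivity checked componentwise. No substantive differences to report.
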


\begin{proof}
We have $\nu_{\infty,1,1}\in\operatorname{ID}_c(\R,\boxplus)$ which follows from, e.g. the characterisation in [\cite{VDN}~Theorem 3.7.3,~(2)], and further $R_{\nu_{\infty,1,1}}=(1,1,1,\dots)$, which is clearly the unit for $\boxdot$.

We have to show that this product is well defined.
\begin{enumerate}
\item
Let $\mu,\nu\in\operatorname{ID}_c(\R,\boxplus)$ and  consider the corresponding free cumulant sequences $(k_n(\mu))_{n\in\N}$ and $(k_n(\nu))_{n\in\N}$. 
Both sequences are conditionally positive definite and by~[\cite{NS} Theorem 13.16, properties (1) and (2)], [\cite{NS} Theorem 13.16, property (3)] and [\cite{NS} Lemma 13.13] also exponentially bounded, i.e. $|\kappa_n(\bullet)|\leq C_{\bullet}^n$, $n\in\N$ for some constant $C_{\bullet}>0$, depending on the measures $\mu$ and $\nu$, respectively.

Hence, $|\kappa_n(\mu)\cdot\kappa_n(\nu)|\leq (C_{\mu}\cdot C_{\nu})^n$ for all $n\in\N$, i.e. the point-wise product of the free cumulants does not grow faster than exponentially. 

\item
In order to show positivity, we consider the corresponding Hankel matrices. Let $\tilde{\kappa}_n(\bullet):=\kappa_{n+2}(\bullet)$ for $n\in\N$. Then, by the Schur Product Theorem, 
$$
(\tilde{\kappa}_{i+j}(\mu)\cdot \tilde{\kappa}_{i+j}(\nu))_{0\leq i,j\leq n},
$$ 
is positive definite.  

Therefore the sequence $(h_n)_{n\in\N^*}$, with $h_1:=\kappa_1(\mu)\cdot\kappa_1(\nu)$ and $h_n:=\kappa_n(\mu)\cdot\kappa_n(\nu)$, $n\in\N^*$, is conditionally positive definite and exponentially bounded.

By Proposition~\ref{Char_infinitely_div} there exists 
a probability measure $\xi\in \operatorname{ID}_c(\R,\boxplus)$ such that $h_n=\kappa_n(\xi)$,$n\in\N^*$. Finally, let $\mu\boxdot\nu:=\xi$.
\item
Associativity and distributivity follow from the fact that the operations are given component-wise, which individually satisfy these properties. Apply $R$ to 
$(\mu\boxplus\nu)\boxdot\eta$, which by definition, yields, $R_{\mu\boxplus\nu}\cdot_H R_{\eta}=(R_{\mu}+R_{\nu})\cdot_H R_{\eta}$ from which the result $\mu\boxdot\eta\boxplus\nu\boxdot\eta$, after applying $R^{-1}$, follows.
\end{enumerate}
\end{proof}

We remark that the binary operation $\boxdot$ can be lifted via the exponential map to $\operatorname{ID}^*(S^1,\boxtimes)$ and $\operatorname{ID}_c(\R_+^*,\boxtimes)$, respectively. However, in the analytic case some additional considerations are necessary.

\begin{prop}
The family of Dirac distributions is a two-sided ideal in $\operatorname{ID}_c(\R,\boxplus)$. We have  
\begin{eqnarray*}
\delta_a\boxplus\delta_b&=&\delta_{a+b},\\
\delta_a\boxdot\delta_b&=&\delta_{ab}.
\end{eqnarray*}
Further for $\mu\in\operatorname{ID}(\R,\boxplus)$ we have
\begin{equation*}
\delta_a\boxdot\mu=\delta_{\kappa_1(\mu)\cdot a}
\end{equation*}
where $\kappa_1(\mu)$ is the first free cumulant of $\mu$.
\end{prop}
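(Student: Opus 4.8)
The plan is to push everything through the linearising transforms. The one fact to record first is that under the $R$-transform the Dirac mass $\delta_a$ corresponds to the constant series $R_{\delta_a}(z)\equiv a$; equivalently, its free cumulants are $\kappa_1(\delta_a)=a$ and $\kappa_n(\delta_a)=0$ for $n\geq 2$. This is immediate from $G_{\delta_a}(w)=(w-a)^{-1}$ together with the defining relation $G_\mu(R_\mu(z)+1/z)=z$, which forces $R_{\delta_a}(z)=a$. I would also note at the outset that $\delta_a\in\operatorname{ID}_c(\R,\boxplus)$, since $\delta_a=\delta_{a/n}^{\boxplus n}$ for every $n\in\N^*$, so that the whole statement lives inside the semi-ring of the previous theorem.

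Given this, the two displayed identities are one-line verifications at the level of transforms. Since $R$ linearises $\boxplus$, one gets $R_{\delta_a\boxplus\delta_b}=R_{\delta_a}+R_{\delta_b}=a+b=R_{\delta_{a+b}}$, and injectivity of $R$ on $M(\R)$ gives $\delta_a\boxplus\delta_b=\delta_{a+b}$. For the second, by the definition~\eqref{boxdot} we have $R_{\delta_a\boxdot\delta_b}=R_{\delta_a}\cdot_H R_{\delta_b}$, and the Hadamard product of the two constant series $a$ and $b$ is the constant series $ab$; hence $\delta_a\boxdot\delta_b=R^{-1}(ab)=\delta_{ab}$. The same computation with one general factor proves the last formula: writing $R_\mu(z)=\sum_{n\geq 0}\kappa_{n+1}(\mu)z^n$, the Hadamard product with the constant series $a$ annihilates every coefficient except the zeroth, leaving the constant $a\,\kappa_1(\mu)$, so $\delta_a\boxdot\mu=\delta_{\kappa_1(\mu)\,a}$.

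The ideal claim then follows by assembling these. The family of Dirac masses is closed under $\boxplus$ by the first identity, contains the additive neutral element $\delta_0$, and by the last formula (and commutativity of $\boxdot$) absorbs $\boxdot$-multiplication by an arbitrary element of $\operatorname{ID}_c(\R,\boxplus)$; hence it is a two-sided ideal of the semi-ring $(\operatorname{ID}_c(\R,\boxplus),\boxplus,\boxdot,\delta_0,\nu_{\infty,1,1})$.

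I do not expect a genuine obstacle here. The only points needing a little care are bookkeeping of conventions — whether $R$ is read in $\C[[z]]$ or $z\C[[z]]$, with the corresponding shift in the cumulant index — and the remark that the output series $ab$ and $a\,\kappa_1(\mu)$ actually lie in the image of the $R$-transform, which is clear since every constant is $R_{\delta_c}$ for some $c$ and such Diracs are infinitely divisible; so no positivity/exponential-boundedness argument via Proposition~\ref{Char_infinitely_div} is even required in this case.
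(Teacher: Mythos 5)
Your proposal is correct, and it is exactly the computation the paper intends: the paper states this proposition without any proof, treating it as immediate from $R_{\delta_a}(z)\equiv a$ (i.e. $\kappa_1(\delta_a)=a$, $\kappa_n(\delta_a)=0$ for $n\geq 2$) together with additivity of $R$ under $\boxplus$ and the coefficient-wise definition~(\ref{boxdot}) of $\boxdot$, just as in the proof it does give for the semicircular family. Your added remarks — that $\delta_a=\delta_{a/n}^{\boxplus n}$ puts the Diracs inside $\operatorname{ID}_c(\R,\boxplus)$, and that no appeal to Proposition~\ref{Char_infinitely_div} is needed because the resulting constant series are themselves $R$-transforms of Diracs — are accurate and only make explicit what the paper leaves unsaid.
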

For $a,b\in\R$ and $r,s>0$, the {\bf semicircle law} centred at $a$ and of radius $r$, is defined as the distribution $\gamma_{a,r}:\C[z]\rightarrow\C$, given by
\begin{equation*}
\gamma_{a,r}(z^n):=\frac{2}{\pi r^2}\int_{a-r}^{a+r} t^n\sqrt{r^2-(t-a)^2}\,dt\qquad \forall n\in\N,
\end{equation*}
and its ${R}$-transform is 
\begin{equation}
\label{Rsemicircle}
{R}_{\gamma_{a,r}}(z)=a+\frac{r^2}{4}z~.
\end{equation}
\begin{prop}
Let $\gamma_{a,0}:=\delta_a$. The family of semicircular distributions is a two-sided ideal in $\operatorname{ID}_c(\R,\boxplus)$, which contains the Dirac distributions as sub-ideal. We have 
\begin{eqnarray}\nonumber
\gamma_{a,r}\boxplus\gamma_{b,s} & = & \gamma_{a+b,\sqrt{r^2+s^2}}, \\
\gamma_{a,r}\boxdot\gamma_{b,s} & = &  \gamma_{ab, rs/2}.
\end{eqnarray}  
\end{prop}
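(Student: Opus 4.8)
The plan is to transport the whole computation to the level of $R$-transforms. By the lemma identifying $(\operatorname{ID}_c(\R,\boxplus),\boxplus)$ with $(\mathcal{R}_{\infty,\boxplus}(\R),+)$ via the $R$-transform, the operation $\boxplus$ becomes coefficient-wise addition, and by the definition~\eqref{boxdot} the product $\boxdot$ becomes the Hadamard product of the corresponding power series. The only concrete input needed is~\eqref{Rsemicircle}, $R_{\gamma_{a,r}}(z)=a+\tfrac{r^2}{4}z$, so that $\gamma_{a,r}$ carries the free cumulant sequence $(a,\tfrac{r^2}{4},0,0,\dots)$. This sequence is conditionally positive definite (its shift $(\tfrac{r^2}{4},0,0,\dots)$ has a positive semidefinite Hankel matrix, since $\tfrac{r^2}{4}\geq 0$) and trivially exponentially bounded, hence $\gamma_{a,r}\in\operatorname{ID}_c(\R,\boxplus)$ by Proposition~\ref{Char_infinitely_div}; with the convention $\gamma_{a,0}=\delta_a$ this also recovers the Diracs as semicircular laws.

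First I would check the two displayed identities. For $\boxplus$ one has $R_{\gamma_{a,r}}+R_{\gamma_{b,s}}=(a+b)+\tfrac{r^2+s^2}{4}z=R_{\gamma_{a+b,\sqrt{r^2+s^2}}}$, and injectivity of the $R$-transform yields $\gamma_{a,r}\boxplus\gamma_{b,s}=\gamma_{a+b,\sqrt{r^2+s^2}}$. For $\boxdot$, the Hadamard product of $(a,\tfrac{r^2}{4},0,\dots)$ with $(b,\tfrac{s^2}{4},0,\dots)$ equals $(ab,\tfrac{r^2 s^2}{16},0,\dots)$, and since $\tfrac{r^2 s^2}{16}=\tfrac14\bigl(\tfrac{rs}{2}\bigr)^2$ this is the cumulant sequence of $\gamma_{ab,\,rs/2}$; applying $R^{-1}$ gives $\gamma_{a,r}\boxdot\gamma_{b,s}=\gamma_{ab,rs/2}$.

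Next I would establish that the semicircular family $\{\gamma_{a,r}:a\in\R,\ r\geq 0\}$ is an ideal of the semi-ring $(\operatorname{ID}_c(\R,\boxplus),\boxplus,\boxdot)$. Closure under $\boxplus$ is exactly the first identity. For closure under $\boxdot$ against an arbitrary $\mu\in\operatorname{ID}_c(\R,\boxplus)$ with cumulants $(\kappa_n(\mu))_{n\in\N^*}$, the Hadamard product of $R_{\gamma_{a,r}}(z)=a+\tfrac{r^2}{4}z$ with $R_\mu(z)=\sum_{n\geq 0}\kappa_{n+1}(\mu)z^n$ has coefficient sequence $\bigl(a\kappa_1(\mu),\tfrac{r^2}{4}\kappa_2(\mu),0,0,\dots\bigr)$. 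Here conditional positive-definiteness of $(\kappa_n(\mu))_{n\in\N^*}$ forces $\kappa_2(\mu)\geq 0$, so $\tfrac{r^2}{4}\kappa_2(\mu)=\tfrac14\bigl(r\sqrt{\kappa_2(\mu)}\bigr)^2$ and the product equals $R_{\gamma_{a\kappa_1(\mu),\,r\sqrt{\kappa_2(\mu)}}}$; hence $\gamma_{a,r}\boxdot\mu=\gamma_{a\kappa_1(\mu),\,r\sqrt{\kappa_2(\mu)}}$ is again semicircular, and commutativity of $\boxdot$ makes the ideal two-sided. Finally, the sub-ideal claim is the $r=0$ specialisation: $\delta_a\boxplus\delta_b=\delta_{a+b}$ and $\delta_a\boxdot\mu=\gamma_{a\kappa_1(\mu),0}=\delta_{a\kappa_1(\mu)}$ (consistent with the preceding proposition), so the Diracs form a two-sided ideal contained in the semicircular one.

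There is essentially no hard step: the content is the coefficient bookkeeping above. The one point that is not purely formal is that $\boxdot$-multiplying a semicircular law by a general element of $\operatorname{ID}_c(\R,\boxplus)$ must land back in $\operatorname{ID}_c(\R,\boxplus)$; this is automatic once one observes the result is itself semicircular, but it does require the inequality $\kappa_2(\mu)\geq 0$ so that $r\sqrt{\kappa_2(\mu)}$ is a legitimate (real, non-negative) radius — the sole place where infinite divisibility of $\mu$, via conditional positive-definiteness of its cumulant sequence, enters the argument.
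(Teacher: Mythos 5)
Your proposal is correct and follows essentially the same route as the paper: compute with the free cumulant sequences $(a,\tfrac{r^2}{4},0,\dots)$ via the $R$-transform, and use $\kappa_2(\mu)\geq 0$ to see that a Hadamard product with a semicircular cumulant sequence is again semicircular, hence the ideal property. You are in fact somewhat more explicit than the paper (which only computes $\gamma_{a,r}\boxdot\gamma_{b,s}$ and notes the sign of the second cumulant), since you also spell out the $\boxplus$ identity, the formula $\gamma_{a,r}\boxdot\mu=\gamma_{a\kappa_1(\mu),\,r\sqrt{\kappa_2(\mu)}}$, and the Dirac sub-ideal.
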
 
\begin{proof}
We have ${R}_{\gamma_{a,r}\boxdot\gamma_{b,s}}(z)=abz+\frac{r^2 s^2}{4\cdot 4}z^2$ by component-wise multiplication of the ${R}$-transforms and the relation~(\ref{Rsemicircle}).

For any $\mu\in M_c(\R)$, $\kappa(\mu)=(\kappa_1(\mu),\kappa_2(\mu),\dots)$ with $\kappa_1(\mu)\in\R$ and $\kappa_2(\mu)\geq0$ as, e.g. by~[\cite{NS} Proposition~13.14], $\kappa_2(\mu)=m_0(\rho)=\rho(\R)\geq0$ for some compactly supported finite measure $\rho$. Therefore, the non-negativity of the second free cumulant is always preserved under the commutative point-wise multiplication, and therefore the property of being an ideal follows. 
\end{proof}
\begin{prop}
The family of free Poisson distributions $\mathbf{fP}$ with rate $\lambda\in\R_+$ and jump size $\alpha\in\R$ is closed with respect to the $\boxdot$-convolution, i.e. for $\lambda,\lambda'\in\R_+$ and $\alpha,\beta\in\R$ we have
$$
\nu_{\infty,\lambda,\alpha}\boxdot\nu_{\infty,\lambda',\beta}=\nu_{\infty,\lambda\lambda',\alpha\beta}.
$$
Therefore, $\mathbf{fP}$ is a $\boxdot$-monoid, and for $(\lambda,\alpha)\in\R^*_+\times\R^*$, an abelian group. 
\end{prop}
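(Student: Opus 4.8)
The plan is to reduce the whole statement to the defining formula~(\ref{boxdot}) for $\boxdot$ — that it is the Hadamard product of $R$-transforms — together with the explicit geometric series~(\ref{inf_Poisson}) for the $R$-transform of a free Poisson law. The point is that free Poisson laws form a $2$-parameter family whose $R$-transforms behave under $\cdot_H$ exactly like coordinatewise multiplication of the parameters.

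First I would record that, by~(\ref{inf_Poisson}), the free cumulants of $\nu_{\infty,\lambda,\alpha}$ are $\kappa_n(\nu_{\infty,\lambda,\alpha})=\lambda\alpha^{n}$ for $n\in\N^*$, i.e. $R_{\nu_{\infty,\lambda,\alpha}}(z)=\sum_{n\geq0}\lambda\alpha^{n+1}z^n$. In particular, for every $\lambda\in\R_+$ and $\alpha\in\R$ the sequence $(\lambda\alpha^n)_{n\in\N^*}$ is exponentially bounded, and its shifted Hankel matrix equals $\lambda\alpha^2\,w w^{\top}$ with $w=(1,\alpha,\alpha^2,\dots)^{\top}$, hence is positive semidefinite; so the sequence is conditionally positive definite and Proposition~\ref{Char_infinitely_div} gives $\nu_{\infty,\lambda,\alpha}\in\operatorname{ID}_c(\R,\boxplus)$. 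Thus $\mathbf{fP}\subset\operatorname{ID}_c(\R,\boxplus)$ and $\boxdot$ is defined on $\mathbf{fP}$.

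Next I would compute the Hadamard product directly:
$$
R_{\nu_{\infty,\lambda,\alpha}}(z)\cdot_H R_{\nu_{\infty,\lambda',\beta}}(z)=\sum_{n=0}^{\infty}(\lambda\alpha^{n+1})(\lambda'\beta^{n+1})z^n=\sum_{n=0}^{\infty}(\lambda\lambda')(\alpha\beta)^{n+1}z^n=R_{\nu_{\infty,\lambda\lambda',\alpha\beta}}(z),
$$
so applying $R^{-1}$ yields $\nu_{\infty,\lambda,\alpha}\boxdot\nu_{\infty,\lambda',\beta}=\nu_{\infty,\lambda\lambda',\alpha\beta}$, the right-hand side lying in $\operatorname{ID}_c(\R,\boxplus)$ by the previous paragraph. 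This proves closure of $\mathbf{fP}$ under $\boxdot$. Since $R_{\nu_{\infty,1,1}}(z)=\sum_{n\geq0}z^n=(1,1,1,\dots)$ is the $\boxdot$-unit, $\nu_{\infty,1,1}$ is a two-sided identity on $\mathbf{fP}$, and commutativity and associativity of $\boxdot$ on $\mathbf{fP}$ are inherited from the ambient commutative semi-ring $(\operatorname{ID}_c(\R,\boxplus),\boxplus,\boxdot)$ (equivalently, from those of multiplication of parameters). Hence $(\mathbf{fP},\boxdot,\nu_{\infty,1,1})$ is a commutative monoid; more precisely, $\Phi\colon(\lambda,\alpha)\mapsto\nu_{\infty,\lambda,\alpha}$ is a surjective homomorphism from $(\R_+\times\R,\,\cdot\,)$ with coordinatewise product and unit $(1,1)$ onto it.

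For the group assertion I would restrict $\Phi$ to $\R_+^*\times\R^*$. There $\Phi$ is injective, because the first two cumulants recover the parameters: $\alpha=\kappa_2/\kappa_1$ and $\lambda=\kappa_1^2/\kappa_2$, both meaningful since $\kappa_1=\lambda\alpha\ne0$ and $\kappa_2=\lambda\alpha^2\ne0$. The subfamily $\{\nu_{\infty,\lambda,\alpha}:(\lambda,\alpha)\in\R_+^*\times\R^*\}$ is $\boxdot$-closed ($\lambda\lambda'>0$, $\alpha\beta\ne0$), contains $\nu_{\infty,1,1}$, and each $\nu_{\infty,\lambda,\alpha}$ has two-sided $\boxdot$-inverse $\nu_{\infty,1/\lambda,1/\alpha}$ by the product formula; so $\Phi$ restricts to an isomorphism $(\R_+^*,\cdot)\times(\R^*,\cdot)\cong(\{\nu_{\infty,\lambda,\alpha}:\lambda>0,\ \alpha\ne0\},\boxdot)$ of abelian groups, which is the claim. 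I do not expect a genuine obstacle; the only point that deserves explicit care — and the one place the argument could look incomplete — is the well-definedness of $\nu_{\infty,\lambda,\alpha}\boxdot\nu_{\infty,\lambda',\beta}$ inside $\operatorname{ID}_c(\R,\boxplus)$ when $|\alpha\beta|$ is large, which is exactly what the rank-one Hankel computation via Proposition~\ref{Char_infinitely_div} settles.
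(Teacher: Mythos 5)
Your proof is correct and follows essentially the same route as the paper: the paper's proof is exactly the component-wise (Hadamard) multiplication of the geometric-series $R$-transforms from equation~(\ref{inf_Poisson}). Your additional verifications --- the rank-one Hankel argument showing $\nu_{\infty,\lambda,\alpha}\in\operatorname{ID}_c(\R,\boxplus)$ via Proposition~\ref{Char_infinitely_div}, and the explicit inverse and injectivity computation for the group claim --- merely make explicit what the paper leaves implicit.
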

\begin{proof}
The statement follows from the representation given in equation~(\ref{inf_Poisson}) by component-wise multiplication.
\end{proof}

Let us remark that a more general statement should be valid which holds also for the {\bf compound free Poisson distribution}, cf.~[\cite{NS}~p.206]. 

We derived our analytic results for compactly supported probability measures, but it is natural to assume that one can extend them to all freely infinitely divisible probability measures.

\subsection{Endomorphisms}
As we have now established the Witt semi-ring structure for freely infinitely divisible probability measures, we shall investigate elements of its ring of endomorphisms in analogy with the theory of Witt vectors.

For $a\in\R$, we define the {\bf Teichmüller representative} $\tau$ as:
\begin{equation*}
\tau:\R\rightarrow \operatorname{ID}_c(\R,\boxplus),\quad a\mapsto\nu_{\infty,1,a},
\end{equation*}

which gives a group isomorphism 
$$
\tau:(\R^*,\cdot)\rightarrow (\nu_{\infty,1,\bullet},\boxdot).
$$

Let us define the squared {\bf shift} or {\bf décalage operator} $\hat{\partial}^2$.  For $\mu\in \operatorname{ID}_c(\R,\boxplus)$, with free cumulants $\kappa({\mu})=(\kappa_n({\mu}))_{n\in\N^*}$, let $\hat{\partial}^2(\mu)\in \operatorname{ID}_c(\R,\boxplus)$, such that for all $n\in\N^*$, we have:
$$
\kappa_n(\partial^2(\mu)):=\kappa_{n+2}(\mu),
$$
which by Lemma~\ref{shifted_cummulants} is well-defined. The notation $\hat{\partial}^2$ emphasises the fact that in order to have the décalage operator acting properly on measures we have to take its square, i.e. it is elliptic. 
\begin{prop}
The operator $\hat{\partial}^2$ defines a semi-ring endomorphism of $\operatorname{ID}_c(\R,\boxplus)$, i.e. for $\mu_1,\mu_2\in\operatorname{ID}_c(\R,\boxplus)$, we have:
\begin{eqnarray*}
\hat{\partial}^2(\mu_1\boxplus\mu_2)&=&\hat{\partial}^2(\mu_1)\boxplus \partial^2(\mu_2),\\
\hat{\partial}^2(\mu_1\boxdot\mu_2)&=&\hat{\partial}^2(\mu_1)\boxdot \hat{\partial}^2(\mu_2).
\end{eqnarray*}
In particular, it respects the additive and multiplicative unit, i.e. we have
\begin{eqnarray*}
\hat{\partial}^2(\delta_0)&=&\delta_0,\\
\hat{\partial}^2(\nu_{\infty,1,1})&=&\nu_{\infty,1,1}.
\end{eqnarray*}
\end{prop}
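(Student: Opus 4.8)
The plan is to reduce everything to the single defining property $\kappa_n(\hat\partial^2(\mu))=\kappa_{n+2}(\mu)$ together with two structural facts that are already available: first, that an element of $\operatorname{ID}_c(\R,\boxplus)$ is uniquely determined by its free cumulant sequence (the cumulants determine the moments, which determine the compactly supported measure); and second, that $\hat\partial^2$ is well-defined as a map into $\operatorname{ID}_c(\R,\boxplus)$, which is precisely Lemma~\ref{shifted_cummulants}. So the whole proof amounts to checking that the relevant identities already hold on the level of cumulant sequences, where all three operations in play ($\boxplus$, $\boxdot$, and the index shift) act diagonally.

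For additivity I would use that free cumulants linearise $\boxplus$, i.e. $\kappa_n(\mu_1\boxplus\mu_2)=\kappa_n(\mu_1)+\kappa_n(\mu_2)$ for all $n\in\N^*$. Then for every $n$ one computes $\kappa_n(\hat\partial^2(\mu_1\boxplus\mu_2))=\kappa_{n+2}(\mu_1\boxplus\mu_2)=\kappa_{n+2}(\mu_1)+\kappa_{n+2}(\mu_2)=\kappa_n(\hat\partial^2(\mu_1))+\kappa_n(\hat\partial^2(\mu_2))=\kappa_n(\hat\partial^2(\mu_1)\boxplus\hat\partial^2(\mu_2))$, and equality of the full cumulant sequences gives the claimed identity. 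For multiplicativity I would use that $\boxdot$ acts component-wise on free cumulants, $\kappa_n(\mu_1\boxdot\mu_2)=\kappa_n(\mu_1)\cdot\kappa_n(\mu_2)$ for $n\in\N^*$, which follows directly from the definition of $\boxdot$ via the Hadamard product of $R$-transforms since the $n$-th coefficient of $R_\mu$ is $\kappa_{n+1}(\mu)$. The same chain of equalities then yields $\kappa_n(\hat\partial^2(\mu_1\boxdot\mu_2))=\kappa_{n+2}(\mu_1)\cdot\kappa_{n+2}(\mu_2)=\kappa_n(\hat\partial^2(\mu_1)\boxdot\hat\partial^2(\mu_2))$.

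The statements about the units are then immediate special cases: $\delta_0$ has vanishing free cumulant sequence, so the shifted sequence still vanishes and $\hat\partial^2(\delta_0)=\delta_0$; and $\nu_{\infty,1,1}$ has the constant cumulant sequence $(1,1,1,\dots)$ (equivalently $R_{\nu_{\infty,1,1}}(z)=1/(1-z)$), which is manifestly fixed by the shift, so $\hat\partial^2(\nu_{\infty,1,1})=\nu_{\infty,1,1}$.

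I do not expect a genuine obstacle here. The only point requiring a little care is index bookkeeping: the shift is defined on cumulants indexed by $\N^*$ and sends index $n$ to $n+2$, so one must be consistent about whether a given sequence or $R$-transform coefficient is indexed from $0$ or from $1$; once that is fixed, the diagonal nature of all the operations makes every identity a one-line verification. The substantive content — that the shifted cumulant sequence is again realised by a compactly supported, freely infinitely divisible measure — has already been isolated as Lemma~\ref{shifted_cummulants}, so nothing further needs to be established on that front.
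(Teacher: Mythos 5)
Your argument is correct and is essentially the paper's own proof: both reduce everything to the identities $\kappa_n(\mu_1\boxplus\mu_2)=\kappa_n(\mu_1)+\kappa_n(\mu_2)$ and $\kappa_n(\mu_1\boxdot\mu_2)=\kappa_n(\mu_1)\cdot\kappa_n(\mu_2)$ together with the well-definedness of the shift from Lemma~\ref{shifted_cummulants}, and your treatment of the units ($\delta_0$ with vanishing cumulants, $\nu_{\infty,1,1}$ with constant cumulant sequence $(1,1,\dots)$) matches the paper's. You simply spell out the componentwise verification in more detail than the paper does; no gap.
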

\begin{proof} 
From
$\kappa_n(\mu_1\boxplus\mu_2)=\kappa_n(\mu_1)+\kappa_n(\mu_2)$ and, by definition $\kappa_n(\mu_1\boxdot\mu_2)=\kappa_n(\mu_1)\cdot_{H}\kappa_n(\mu_2)$, $n\in\N^*$, the claim follows.
\end{proof}

We introduce the {\bf Frobenius endomorphism} as follows. For $a\in A^{\N^*}$, $n\in\N$, with $0^0:=1$, let $\mathbf{f}_n:A^{\N^*}\rightarrow A^{\N^*}$, and define  
\begin{equation}
\label{Frob_map}
(a_j)_{j\in\N^*}\mapsto (a_j^n)_{j\in\N^*}.
\end{equation}

\begin{prop}
For $\mu\in \operatorname{ID}_c(\R,\boxplus)$, $n\in\N$, we define
$$
\mathbf{f}_n(\mu):=R^{-1}(\mathbf{f}_n(\kappa(\mu))).
$$
Then the following holds:
\begin{equation}
\mathbf{f}_n(\mu)=\mu^{\boxdot n},
\end{equation}
and $\mathbf{f}_n$ yields a $\boxdot$-endomorphisms, i.e. for $\mu,\nu\in \operatorname{ID}_c(\R,\boxplus)$: 
\begin{eqnarray*}
\mathbf{f}_n(\mu\boxdot\nu)&=&\mathbf{f}_n(\mu)\boxdot\mathbf{f}_n(\nu),\\
(\mu\boxdot\nu)^{\boxdot n}&=&\mu^{\boxdot n}\boxdot\nu^{\boxdot n}.
\end{eqnarray*}
\end{prop}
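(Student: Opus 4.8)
The plan is to transport everything to the cumulant side, where $\boxdot$ is simply the Hadamard (componentwise) product in the commutative ring $\R$ and where the map $\mathbf{f}_n$ is just the componentwise $n$-th power, and then to use that $R$ (equivalently $\mu\mapsto\kappa(\mu)$, after identifying the $R$-transform with its sequence of coefficients) is a monomorphism, so that an identity of measures in $\operatorname{ID}_c(\R,\boxplus)$ is equivalent to the corresponding identity of cumulant sequences. Concretely, writing $\kappa_j(\bullet)$ for the $j$-th free cumulant, we have $\kappa_j(\eta\boxdot\zeta)=\kappa_j(\eta)\,\kappa_j(\zeta)$ by the definition of $\boxdot$ in~(\ref{boxdot}), hence by associativity $\kappa_j(\eta^{\boxdot n})=\kappa_j(\eta)^n$ for every $j,n$. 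Thus the three assertions become, after applying $R$, purely componentwise statements about powers and products of real numbers.

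For the first identity I would first record that $\mathbf{f}_n(\mu)$ is well defined, i.e. that the sequence $\mathbf{f}_n(\kappa(\mu))=(\kappa_j(\mu)^n)_{j\in\N^*}$ lies in the image of the restriction of $R$ to $\operatorname{ID}_c(\R,\boxplus)$. This is immediate from the semi-ring structure established above: $\mu^{\boxdot n}\in\operatorname{ID}_c(\R,\boxplus)$ as a finite $\boxdot$-product, and its cumulant sequence is $(\kappa_j(\mu)^n)_j$ by the previous paragraph; alternatively, one argues directly that iterated Schur powers preserve conditional positive definiteness (Schur Product Theorem, exactly as in the proof of the semi-ring theorem) and exponential boundedness, so that Proposition~\ref{Char_infinitely_div} applies. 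In either case $R_{\mathbf{f}_n(\mu)}=(\kappa_j(\mu)^n)_j=R_{\mu^{\boxdot n}}$, and injectivity of $R$ gives $\mathbf{f}_n(\mu)=\mu^{\boxdot n}$. The edge cases are consistent: $n=0$ gives the multiplicative unit $\nu_{\infty,1,1}=\mu^{\boxdot 0}$ with the convention $0^0:=1$, and $n=1$ gives $\mu=\mu^{\boxdot 1}$.

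The remaining two identities then follow from commutativity of ordinary multiplication in $\R$. For each $j$,
$$
\kappa_j\!\left(\mathbf{f}_n(\mu\boxdot\nu)\right)=\left(\kappa_j(\mu)\kappa_j(\nu)\right)^n=\kappa_j(\mu)^n\kappa_j(\nu)^n=\kappa_j(\mathbf{f}_n(\mu))\,\kappa_j(\mathbf{f}_n(\nu))=\kappa_j\!\left(\mathbf{f}_n(\mu)\boxdot\mathbf{f}_n(\nu)\right),
$$
so by injectivity of $R$ we get $\mathbf{f}_n(\mu\boxdot\nu)=\mathbf{f}_n(\mu)\boxdot\mathbf{f}_n(\nu)$, that is, $\mathbf{f}_n$ is a $\boxdot$-endomorphism; combining this with the first identity yields $(\mu\boxdot\nu)^{\boxdot n}=\mathbf{f}_n(\mu\boxdot\nu)=\mathbf{f}_n(\mu)\boxdot\mathbf{f}_n(\nu)=\mu^{\boxdot n}\boxdot\nu^{\boxdot n}$.

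The only point requiring genuine care — the ``obstacle'', such as it is — is the well-definedness in the first step: one must remain inside $\operatorname{ID}_c(\R,\boxplus)$, i.e. know that $R^{-1}$ may be applied to $\mathbf{f}_n(\kappa(\mu))$, which is where the semi-ring theorem (equivalently the Schur Product Theorem together with Proposition~\ref{Char_infinitely_div}) is used. Everything beyond that is formal, since on the cumulant side $\boxdot$ is componentwise multiplication and $\mathbf{f}_n$ is the componentwise $n$-th power, so associativity, commutativity and multiplicativity are inherited from $(\R,\cdot)$. One should note that $\mathbf{f}_n$ is \emph{not} a $\boxplus$-endomorphism for $n\geq 2$, since $(\kappa_j(\mu)+\kappa_j(\nu))^n\neq\kappa_j(\mu)^n+\kappa_j(\nu)^n$ in general; the proposition accordingly claims only multiplicativity.
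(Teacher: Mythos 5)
Your proof is correct and follows essentially the same route as the paper: work on the cumulant side, where $\mathbf{f}_n(\kappa(\mu))=(\kappa_j(\mu)^n)_{j\in\N^*}=\kappa(\mu)^{\boxdot n}$, and conclude via injectivity of $R$. Your additional check that $\mathbf{f}_n(\kappa(\mu))$ actually lies in the image of $R$ restricted to $\operatorname{ID}_c(\R,\boxplus)$ (via the semi-ring theorem, i.e.\ the Schur Product Theorem together with Proposition~\ref{Char_infinitely_div}) is a point the paper leaves implicit, and is a worthwhile addition rather than a deviation.
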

\begin{proof}
$\mathbf{f}_n(\kappa(\mu))=(\kappa_j(\mu)^n)_{j\in\N^*}=\kappa(\mu)^{\boxdot n}$ from which the claim follows.
\end{proof}

We remark that the set of free Poisson distributions is invariant with respect to  $\partial^2$ and $\mathbf{f}_n$, in particular 
$$
\hat{\partial}^2(\nu_{\infty,\lambda,\alpha})=\nu_{\infty,\lambda\alpha^2,\alpha},
$$
holds.
\subsection{Geometric properties}
The space of infinitely divisible probability measures possesses, due to the Lévy-Khintchine  representation, a remarkable geometric structure. 

\begin{prop}
There exists a surjection $\pi:\operatorname{ID}_c(\R,\boxplus)\rightarrow \operatorname{Meas}_c(\R)$, with fibre 
$$
\pi^{-1}(\rho)\cong\R\times\{\rho\}\cong\R.
$$
The space $\operatorname{ID}_c(\R,\boxplus)$ carries two actions, namely for $\mu\in \operatorname{ID}_c(\R,\boxplus)$, $r\in\R$ and $c\in\R_+^*$, we have:
\begin{itemize}
\item the $\R$-action,(shift along the fibre) is given by:
$$
r.\mu:=\delta_{r}\boxplus\mu,
$$
\item
the $\R^*_+$-action (scaling) is given by: 
$$
c.\mu:=\nu_{\infty,c,1}\boxdot \mu.
$$
\end{itemize}
\end{prop}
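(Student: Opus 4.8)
The plan is to extract everything from the free L\'evy--Khintchine parametrisation of Proposition~\ref{Char_infinitely_div}, together with the monoid structure of $(\operatorname{ID}_c(\R,\boxplus),\boxplus)$ and the Witt semi-ring structure already established.

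\emph{The map $\pi$ and its fibres.} Given $\mu\in\operatorname{ID}_c(\R,\boxplus)$, its free cumulant sequence is conditionally positive definite and exponentially bounded, so by Proposition~\ref{Char_infinitely_div}(3) there is a unique pair $(\gamma,\rho)\in\R\times\operatorname{Meas}_c(\R)$ with $R_\mu(z)=\sum_{n\geq0}\kappa_{n+1}(\mu)z^n=\gamma+\int_\R\frac{z}{1-xz}\,d\rho(x)$, and comparing constant terms gives $\gamma=\kappa_1(\mu)$. I would first observe that $\mu\mapsto(\gamma,\rho)$ is a bijection from $\operatorname{ID}_c(\R,\boxplus)$ onto $\R\times\operatorname{Meas}_c(\R)$: it is injective because the displayed formula recovers $R_\mu$, hence $\mu$ itself (the $R$-transform being a monomorphism), from $(\gamma,\rho)$; and it is surjective because, for any $(\gamma,\rho)$, the sequence $s_1:=\gamma$, $s_{n+1}:=m_{n-1}(\rho)$ for $n\geq1$ is conditionally positive definite and exponentially bounded by the implication $3.\Rightarrow1.$ of Proposition~\ref{Char_infinitely_div}, whence by $1.\Rightarrow2.$ it is the free cumulant sequence of some $\mu\in\operatorname{ID}_c(\R,\boxplus)$. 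Letting $\pi$ be this bijection followed by the projection onto $\operatorname{Meas}_c(\R)$ gives a surjection with $\pi^{-1}(\rho)=\R\times\{\rho\}\cong\R$, the $\R$-coordinate of $\mu\in\pi^{-1}(\rho)$ being $\kappa_1(\mu)$; concretely, if $\mu_\rho$ is the element of the fibre with $\kappa_1=0$, then $\gamma\mapsto\delta_\gamma\boxplus\mu_\rho$ inverts this parametrisation, since $R_{\delta_\gamma\boxplus\mu_\rho}=\gamma+R_{\mu_\rho}$.

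\emph{The $\R$-action.} Since $\delta_r=\delta_{r/n}^{\boxplus n}$ lies in $\operatorname{ID}_c(\R,\boxplus)$ and the latter is a $\boxplus$-monoid, $r.\mu:=\delta_r\boxplus\mu\in\operatorname{ID}_c(\R,\boxplus)$. The axioms $0.\mu=\delta_0\boxplus\mu=\mu$ and $r.(s.\mu)=(\delta_r\boxplus\delta_s)\boxplus\mu=\delta_{r+s}\boxplus\mu=(r+s).\mu$ follow from associativity of $\boxplus$ and $\delta_r\boxplus\delta_s=\delta_{r+s}$, so this is an action of $(\R,+)$. From $R_{r.\mu}=r+R_\mu$ it sends the coordinates $(\gamma,\rho)$ to $(\gamma+r,\rho)$, i.e. it preserves every fibre and translates along it --- the announced shift along the fibre.

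\emph{The $\R_+^*$-action.} Here $\nu_{\infty,c,1}\in\operatorname{ID}_c(\R,\boxplus)$ and $\boxdot$ is internal to $\operatorname{ID}_c(\R,\boxplus)$ by the Witt semi-ring theorem, so $c.\mu:=\nu_{\infty,c,1}\boxdot\mu\in\operatorname{ID}_c(\R,\boxplus)$. As $\nu_{\infty,1,1}$ is the $\boxdot$-unit we get $1.\mu=\mu$, and $c.(c'.\mu)=(\nu_{\infty,c,1}\boxdot\nu_{\infty,c',1})\boxdot\mu=\nu_{\infty,cc',1}\boxdot\mu=(cc').\mu$ by associativity of $\boxdot$ and the identity $\nu_{\infty,c,1}\boxdot\nu_{\infty,c',1}=\nu_{\infty,cc',1}$, so this is an action of $(\R_+^*,\cdot)$. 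Since $R_{\nu_{\infty,c,1}}(z)=c/(1-z)$ has the constant free cumulant sequence $(c,c,\dots)$, the Hadamard product with it is plain multiplication by the scalar $c$; thus $c.\mu$ has $R$-transform $c\,R_\mu(z)$ and therefore corresponds to the coordinates $(c\gamma,c\rho)$, with $c\rho\in\operatorname{Meas}_c(\R)$ because $c>0$: this is the scaling. Finally, there is no genuine obstacle here; the care required is (i) to invoke the uniqueness clause of Proposition~\ref{Char_infinitely_div} so that $\pi$, and the coordinates $(\gamma,\rho)$, are well defined, (ii) to follow these coordinates through the $R$-transform under each operation, and (iii) to record at the outset that $\delta_r$ and $\nu_{\infty,c,1}$ already lie in $\operatorname{ID}_c(\R,\boxplus)$, so that $\boxplus$ and $\boxdot$ keep us inside the space.
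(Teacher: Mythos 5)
Your proof is correct and rests on exactly the ingredients the paper's own (very terse) proof presupposes: the parametrisation $\mu\leftrightarrow(\gamma,\rho)$ from Proposition~\ref{Char_infinitely_div}, the identity $R_{\delta_r\boxplus\mu}=r+R_\mu$, and the Witt semi-ring structure making $\nu_{\infty,c,1}\boxdot\mu$ well defined with $R$-transform $cR_\mu$. The paper's proof only records the $\R$-action axioms and leaves the surjection, the fibres and the $\R_+^*$-action implicit, so your write-up is the same approach carried out in full detail.
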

\begin{proof}
$(r+r').\mu=\delta_{r+r'}\boxplus\mu=\delta_r\boxplus(\delta_{r'}\boxplus\mu)$ and $0.\mu=\delta_0\boxplus\mu=\mu$.
\end{proof}

So, for the Dirac, semi-circular and free Poisson distribution, with $c,r,\lambda\geq0$, and $\alpha\in\R$, we have
\begin{eqnarray*}
c.\delta_a&=& \delta_{c\cdot a},\\
c.\gamma_{a,r}&=& \gamma_{c\cdot a,{\sqrt{c}}\cdot r},\\
c.\nu_{\lambda,\alpha}&=& \nu_{c\cdot\lambda,\alpha},
\end{eqnarray*}
which shows that these classes of measures are invariant under the action. In particular, the semi-circular distributions form a convex sub-cone. 
\subsection{Relation with classical infinite divisibility}
Here we shall discuss the relation of our results with classically infinitely divisible measures, which due to the Bercovici-Pata bijection~\cite{BPB}, is feasible.
The main references for this section are~\cite{BNT,BPB,C2014}. Let, $\operatorname{ID}(\R,*)$ and $\operatorname{ID}(\R,\boxplus)$ denote the set of $*$-infinitely (classically) divisible and $\boxplus$-infinitely (freely) divisible probability measure on the real line, respectively. Let us emphasise, here we do not assume any restriction on the support.  

We have $\operatorname{ID}_c(\R,\boxplus)\subset\operatorname{ID}(\R,\boxplus)$ as a dense sub-monoid and that there is a bijection $\varphi$ between characteristic pairs $(\gamma,\sigma)$ and $\bullet$-infinitely divisible probability measures on $\R$, where $\bullet=*,\boxplus$, i.e.  we have
\begin{equation}
\label{LKpairs}
\varphi_{\operatorname{cp}}:\R\times\operatorname{Meas}(\R)\rightarrow\operatorname{ID}(\R,\bullet).
\end{equation}

Let $\mu\in\operatorname{ID}(\R,\boxplus)$ with characteristic pair $(\gamma,\sigma)$. Then from~[\cite{BNT} (4.3)] we obtain
\begin{equation}
\label{R-trafo_char_pair}
R_{\mu}(z)=\gamma+\int_{\R}\frac{z+x}{1-xz}d\sigma(x).
\end{equation}

\begin{prop}
Let $\pi_*:\operatorname{ID}(\R,*)\rightarrow\operatorname{Meas}(\R)$ and $\pi_{\boxplus}:\operatorname{ID}(\R,\boxplus)\rightarrow\operatorname{Meas}(\R)$ be the projections onto the second factor of the characteristic pair, i.e. $\pi_i(\mu):=\pr_2(\gamma,\sigma)=\sigma$, $i=*,\boxplus$ where $\mu$ is represented by $(\gamma,\sigma)$. The fibre $\pi^{-1}_i(\sigma)$ is isomorphic to $\R$, and the Bercovi-Pata bijection gives a bundle map $\varphi_{\operatorname{BP}}:\operatorname{ID}(\R,*)\rightarrow \operatorname{ID}(\R,\boxplus)$, i.e. the following diagram 
\[
\begin{xy}
  \xymatrix{
 \operatorname{ID}(\R,*)\ar[dr]_{\pi_*}\ar[rr]_{\varphi_{\operatorname{BP}}} & &\operatorname{ID}(\R,\boxplus)\ar[dl]^{\pi_{\boxplus}} \\
       & \operatorname{Meas}(\R)&
               }
\end{xy}
\]
commutes. 
\end{prop}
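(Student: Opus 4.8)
The plan is to verify the commutativity of the triangle pointwise by unwinding the definitions of the two projections and the Bercovici--Pata bijection $\varphi_{\operatorname{BP}}$ in terms of characteristic pairs. First I would recall that, by the L\'evy--Khintchine representation in both the classical and the free setting, every $\mu\in\operatorname{ID}(\R,*)$ is represented by a unique characteristic pair $(\gamma,\sigma)\in\R\times\operatorname{Meas}(\R)$ via $\varphi_{\operatorname{cp}}$ as in~(\ref{LKpairs}), and likewise every $\nu\in\operatorname{ID}(\R,\boxplus)$ is represented by a unique pair through the $R$-transform formula~(\ref{R-trafo_char_pair}). The crucial structural input is that the Bercovici--Pata bijection is precisely the map that sends the $*$-infinitely divisible measure with characteristic pair $(\gamma,\sigma)$ to the $\boxplus$-infinitely divisible measure with the \emph{same} characteristic pair $(\gamma,\sigma)$; this is the defining property of $\varphi_{\operatorname{BP}}$ as recorded in~\cite{BPB,BNT}.

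Granting that description, the proof is essentially a diagram chase. Take $\mu\in\operatorname{ID}(\R,*)$ and let $(\gamma,\sigma)$ be its characteristic pair, so that $\pi_*(\mu)=\pr_2(\gamma,\sigma)=\sigma$ by definition of $\pi_*$. By the description of $\varphi_{\operatorname{BP}}$, the measure $\varphi_{\operatorname{BP}}(\mu)\in\operatorname{ID}(\R,\boxplus)$ has characteristic pair $(\gamma,\sigma)$ as well, whence $\pi_{\boxplus}(\varphi_{\operatorname{BP}}(\mu))=\pr_2(\gamma,\sigma)=\sigma$. Therefore $\pi_{\boxplus}\circ\varphi_{\operatorname{BP}}=\pi_*$, which is exactly the assertion that the triangle commutes. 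The claim that each fibre $\pi_i^{-1}(\sigma)$ is isomorphic to $\R$ follows because, once $\sigma$ is fixed, the remaining freedom in the characteristic pair is the single real parameter $\gamma$, and the assignment $\gamma\mapsto\varphi_{\operatorname{cp}}(\gamma,\sigma)$ is a bijection onto that fibre; this also matches the fibre computation already carried out for $\operatorname{ID}_c(\R,\boxplus)$ earlier in the text, where the fibre of $\pi$ over $\rho$ was identified with $\R\times\{\rho\}\cong\R$.

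I would also note the compatibility with the earlier projection $\pi:\operatorname{ID}_c(\R,\boxplus)\to\operatorname{Meas}_c(\R)$: under the inclusion $\operatorname{ID}_c(\R,\boxplus)\subset\operatorname{ID}(\R,\boxplus)$ as a dense sub-monoid, the restriction of $\pi_{\boxplus}$ recovers $\pi$, since in the compactly supported case the free cumulant generating function is $\gamma+\int_\R\frac{z}{1-xz}\,d\rho(x)$ and the pair $(\gamma,\rho)$ from Proposition~\ref{Char_infinitely_div} is related to $(\gamma,\sigma)$ in~(\ref{R-trafo_char_pair}) by an explicit change of the L\'evy measure; this bookkeeping is routine and I would mention it only briefly.

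The main obstacle is not any computation but rather pinning down the precise normalisation conventions: the characteristic pair can be set up in several equivalent ways (the ``$\gamma$'' and the finite measure can be parametrised by $\frac{z}{1-xz}$, $\frac{z+x}{1-xz}$, or $\frac{1+xz}{z-x}$ type kernels), and one must make sure that the same convention is used on the classical side, on the free side, and in the definition of $\varphi_{\operatorname{BP}}$, so that ``same characteristic pair'' is literally true rather than true up to an affine reparametrisation. Once a single consistent convention is fixed — and it is the convention of~(\ref{R-trafo_char_pair}) taken from~\cite{BNT} — the argument above goes through verbatim, and the proof reduces to citing that $\varphi_{\operatorname{BP}}$ acts as the identity on characteristic pairs.
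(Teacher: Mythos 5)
The paper states this proposition without any proof, and your argument supplies exactly the verification it leaves implicit: since the Bercovici--Pata bijection is, by construction, the identity on characteristic pairs $(\gamma,\sigma)$ in the sense of~(\ref{LKpairs}) and~(\ref{R-trafo_char_pair}), the identity $\pi_{\boxplus}\circ\varphi_{\operatorname{BP}}=\pi_*$ is an immediate diagram chase, and the fibre over a fixed $\sigma$ is parametrised by the single real number $\gamma$, hence isomorphic to $\R$. Your caveat about fixing one consistent kernel convention on both the classical and free sides is the only point requiring care, and with the convention of~\cite{BNT} used in~(\ref{R-trafo_char_pair}) the argument is complete and correct.
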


For $\mu,\nu\in\operatorname{ID}(\R,*)$ with classical cumulants $(c(\mu))_{n\in\N^*}$ and $(c(\nu))_{n\in\N^*}$, we define, equivalently to~(\ref{boxdot}), a binary operation $\star$ as follows:
\begin{equation}
\label{class_prod}
c(\mu\star\nu)_n:=c(\mu)_n\cdot c(\nu)_n,
\end{equation}
i.e. the two cumulant sequences are multiplied component-wise, and then the measure is obtained by the inverse Fourier transform of the exponential of the resulting sequence. 

We have:
\[
\begin{tabular}{l|r|l} distribution & parameters& classical cumulants   \\\hline 
Dirac & $a\in\R$ & (a,0,0,0,\dots) \\Normal & $m\in\R$, $\sigma^2\in\R^*_+$ & $(m,\sigma^2,0,0,\dots)$\\ 
Poisson & $\lambda\in\R_+^*$ & $(\lambda,\lambda,\lambda,\lambda,\dots)$ \end{tabular}
\]

\begin{prop}
The joint set of Dirac, normal and Poisson distributions form a commutative monoid for the operation $\star$, with unit the Poisson distribution with parameter $\lambda=1$.  
\end{prop}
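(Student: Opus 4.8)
The claim is that the set $D$ consisting of all Dirac, normal, and Poisson distributions is closed under the component-wise cumulant product $\star$ and forms a commutative monoid with unit the Poisson distribution of parameter $\lambda=1$. The approach is purely computational at the level of classical cumulant sequences, mirroring the proofs of the analogous $\boxdot$-statements (Propositions on the Dirac, semicircular, and free Poisson families). First I would recall from the table that these distributions are parametrised by cumulant sequences of three shapes: $(a,0,0,\dots)$ for Dirac $\delta_a$, $(m,\sigma^2,0,0,\dots)$ for the normal $N(m,\sigma^2)$, and $(\lambda,\lambda,\lambda,\dots)$ for Poisson with parameter $\lambda$. Since $\star$ acts by $c(\mu\star\nu)_n=c(\mu)_n\cdot c(\nu)_n$, commutativity and associativity of $\star$ on $D$ are immediate from commutativity and associativity of multiplication in $\R$ performed coordinatewise; the only genuine content is closure, i.e.\ that each pairwise product of sequences of the three admissible shapes is again of one of the three shapes, together with the identification of the unit.

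The core of the argument is a finite case check over the (unordered) pairs of types. Taking component-wise products: $\delta_a\star\delta_b$ has cumulants $(ab,0,0,\dots)$, hence equals $\delta_{ab}$; $\delta_a\star N(m,\sigma^2)$ has cumulants $(am,0,0,\dots)$, hence is the Dirac $\delta_{am}$; $\delta_a\star(\text{Poisson }\lambda)$ has cumulants $(a\lambda,0,0,\dots)$, again Dirac; $N(m,\sigma^2)\star N(m',\sigma'^2)$ has cumulants $(mm',\sigma^2\sigma'^2,0,0,\dots)$, which is the normal $N(mm',\sigma^2\sigma'^2)$ provided $\sigma^2\sigma'^2\in\R_+^*$, and this holds since $\sigma^2,\sigma'^2>0$; $N(m,\sigma^2)\star(\text{Poisson }\lambda)$ has cumulants $(m\lambda,\sigma^2\lambda,0,0,\dots)$, which is the normal $N(m\lambda,\sigma^2\lambda)$ with $\sigma^2\lambda>0$; and $(\text{Poisson }\lambda)\star(\text{Poisson }\lambda')$ has cumulants $(\lambda\lambda',\lambda\lambda',\lambda\lambda',\dots)$, which is the Poisson distribution with parameter $\lambda\lambda'\in\R_+^*$. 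In every case the resulting sequence has a valid parameter range (crucially, the variance and rate parameters remain strictly positive because they are products of strictly positive reals), so $D$ is $\star$-closed. Finally, the Poisson distribution with $\lambda=1$ has cumulant sequence $(1,1,1,\dots)$, so multiplying any sequence $(c_n)_{n}$ by it component-wise returns $(c_n)_n$ unchanged; thus it is a two-sided identity for $\star$ on $D$.

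I would then note that well-definedness of $\star$ on this set does not require the full analytic machinery of Proposition~\ref{Char_infinitely_div}: each product sequence listed above is visibly the cumulant sequence of an honest member of $\operatorname{ID}(\R,*)$ (a Dirac, a normal with positive variance, or a Poisson), so the recipe ``form the exponential of the component-wise product and invert the Fourier transform'' of~(\ref{class_prod}) indeed lands in $D$. Assembling these observations — associativity and commutativity coordinatewise, the six closure computations, and the identity — yields the monoid structure. The main (and only mild) obstacle is the bookkeeping to ensure the parameter constraints $\sigma^2\in\R_+^*$ and $\lambda\in\R_+^*$ survive under multiplication; since these are preserved by products of positive reals, there is no real difficulty, and one should simply remark that the degenerate value $\sigma^2=0$ or $\lambda=0$ is exactly what collapses a normal or Poisson to a Dirac, which is consistent with the closure table above.
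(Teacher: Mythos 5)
Your case-by-case verification at the level of classical cumulant sequences is correct and is precisely the argument the paper leaves implicit: the proposition is stated without proof, immediately after the table listing the cumulant sequences $(a,0,0,\dots)$, $(m,\sigma^2,0,0,\dots)$ and $(\lambda,\lambda,\dots)$, and closure plus the unit $(1,1,1,\dots)$ follow exactly by your component-wise products. Your bookkeeping on the parameter ranges (strict positivity of $\sigma^2$ and $\lambda$ being preserved, with degenerate cases collapsing to Dirac) and the observation that no general positivity theorem is needed because each product is explicitly identified as a Dirac, normal or Poisson are both accurate.
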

\begin{rem}
Let us note that in equality~(\ref{generating_function}), the pair $(s_1,\rho)$ also uniquely characterises $\mu\in M_{\infty,\boxplus}(\R)$. From expression~(\ref{R-trafo_char_pair}) we obtain the relation $\kappa_1(\mu)=\gamma$, and for the measures $\sigma$ and $\rho$
$$
\int_{\R}\frac{z+x}{1-xz}d\sigma(x)=\int_{\R}\frac{z}{1-xz}d\rho(x).
$$
\end{rem}

\begin{prop}
The following statements hold:
\begin{enumerate}
\item
The sets $\operatorname{ID}(\R,\ast)$ and $\operatorname{ID}(\R,\boxplus)$ are convex cones for the following operation: for $\alpha,\beta\geq0$ and $\mu,\nu\in\operatorname{ID}(\R,\bullet)$ with $\bullet=*,\boxplus$,  let
$$
\alpha\mu+\beta\nu:=\varphi^{-1}(\alpha\gamma_{\mu}+\beta\gamma_{\nu},\alpha\mu+\beta\nu)
$$
where $\varphi_{\operatorname{cp}}^{-1}$ is the inverse of the bijection~(\ref{LKpairs}) .
\item The normal and the semicircular distributions form isomorphic convex sub-cones of $\operatorname{ID}(\R,\bullet)$.
\item
$\operatorname{ID}(\R,\bullet)$, $\bullet=*,\boxplus$, carries a fibre-wise $\R$-action, which is given by 
$$
\delta_a.\mu:=\varphi_{\operatorname{cp}}^{-1}((\gamma+a,\rho)),
$$
where $a\in\R$ and $\varphi_{\operatorname{cp}}^{-1}$ is the compositional inverse of the bijection(\ref{LKpairs}).
\end{enumerate}
\end{prop}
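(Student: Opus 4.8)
The plan is to verify each of the three assertions by reducing everything to the characteristic-pair description $\varphi_{\mathrm{cp}}\colon\R\times\operatorname{Meas}(\R)\to\operatorname{ID}(\R,\bullet)$ of (\ref{LKpairs}), which is a bijection, so that all three operations are well-defined by transport of structure. For the first part, I would fix $\bullet=*$ or $\boxplus$ and check that $\alpha\mu+\beta\nu$ as defined again lies in $\operatorname{ID}(\R,\bullet)$: this is immediate because the right-hand side is $\varphi_{\mathrm{cp}}^{-1}$ applied to a genuine element of $\R\times\operatorname{Meas}(\R)$, the point being that $\alpha\gamma_\mu+\beta\gamma_\nu\in\R$ and $\alpha\mu+\beta\nu$ — here the $\mu,\nu$ in the second slot mean the L\'evy measures $\sigma_\mu,\sigma_\nu$ — is again a finite compactly-nonrestricted Borel measure since $\operatorname{Meas}(\R)$ is itself a convex cone under pointwise scaling and addition of measures. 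Then the cone axioms (compatibility of scalar multiplication, distributivity, $1\cdot\mu=\mu$) follow coordinatewise from the corresponding axioms in $\R$ and in $\operatorname{Meas}(\R)$; I would spell out only that $\varphi_{\mathrm{cp}}$ intertwines the two structures by its very definition, so no computation with $R$-transforms or Fourier transforms is needed.

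For the second part, I would recall the characteristic pairs: the normal distribution with parameters $(m,\sigma^2)$ corresponds (in the classical L\'evy--Khintchine normalisation used here) to the pair whose L\'evy measure is $\sigma^2\delta_0$ and whose shift is $m$, and likewise the semicircular law $\gamma_{a,r}$ has, via (\ref{R-trafo_char_pair}) compared with (\ref{Rsemicircle}), the pair $(a,\tfrac{r^2}{4}\delta_0)$ — both families are exactly the fibre over the ray $\{t\delta_0 : t\ge 0\}\subset\operatorname{Meas}(\R)$. Hence both form sub-cones of $\operatorname{ID}(\R,\bullet)$ closed under the operation of part (1), and the map sending the normal pair $(m,\sigma^2\delta_0)$ to the semicircular pair $(m,\sigma^2\delta_0)$ (i.e. the identity on characteristic pairs, read through the two different bijections $\varphi_{\mathrm{cp}}$) is the claimed isomorphism of convex cones; it is linear because addition and scaling are defined on the level of pairs, and bijective because it is built from the two bijections $\varphi_{\mathrm{cp}}$. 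The only genuine check here is bookkeeping of normalisation constants so that the Gaussian variance $\sigma^2$ matches the semicircular parameter — I would state the dictionary explicitly and note that any fixed affine reparametrisation of the second coordinate would do, so the isomorphism is canonical up to that choice.

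For the third part, I would define the $\R$-action $\delta_a.\mu:=\varphi_{\mathrm{cp}}^{-1}(\gamma+a,\rho)$ where $(\gamma,\rho)$ is the characteristic pair of $\mu$, and verify it is an action: $\delta_0.\mu=\varphi_{\mathrm{cp}}^{-1}(\gamma,\rho)=\mu$ and $\delta_a.(\delta_b.\mu)=\varphi_{\mathrm{cp}}^{-1}((\gamma+b)+a,\rho)=\delta_{a+b}.\mu$, both immediate from the additivity of the shift coordinate in $\R$. Fibre-wise means $\pi_\bullet(\delta_a.\mu)=\pi_\bullet(\mu)$, which holds because only the first coordinate is altered; this is consistent with the bundle picture of the earlier Proposition on $\pi\colon\operatorname{ID}_c(\R,\boxplus)\to\operatorname{Meas}_c(\R)$ with fibre $\R$, and indeed for $\bullet=\boxplus$ one should remark that this $\R$-action agrees with $r.\mu=\delta_r\boxplus\mu$ via the relation $\kappa_1(\delta_r\boxplus\mu)=\kappa_1(\mu)+r$ together with $\kappa_1=\gamma$ from (\ref{R-trafo_char_pair}), since $\boxplus$ by a Dirac mass shifts only the first cumulant and leaves the L\'evy measure untouched. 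I expect the main obstacle to be not any depth but the notational hazard that in the statement of part (1) the symbols $\mu,\nu$ are overloaded, denoting simultaneously the infinitely divisible measures and their L\'evy measures; the proof must make that reading explicit, and must also make sure the convex structure on $\operatorname{Meas}(\R)$ (pointwise on measures, not the $\boxplus$- or $\star$-structure) is the one being transported, otherwise convexity fails.
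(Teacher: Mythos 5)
Your argument is correct; note that the paper states this proposition without giving any proof, so there is nothing to diverge from, and your transport-of-structure argument is precisely the intended one. Reading the second slot of $\alpha\mu+\beta\nu$ as the L\'evy measures $\sigma_\mu,\sigma_\nu$ (the overloaded notation you rightly flag), the cone axioms follow coordinatewise from those of $\R\times\operatorname{Meas}(\R)$ through the bijection~(\ref{LKpairs}); the identification of the normal and semicircular families as the fibres over the ray $\{t\delta_0: t\geq 0\}$ via~(\ref{R-trafo_char_pair}) and~(\ref{Rsemicircle}), with the cone isomorphism being the identity on characteristic pairs (equivalently the restriction of the Bercovici--Pata bijection, consistent with the preceding bundle-map proposition); and the check that the shift action is additive in the first coordinate, fibre-preserving, and for $\bullet=\boxplus$ agrees with $\delta_r\boxplus\mu$ since $R_{\delta_r\boxplus\mu}=r+R_\mu$, are exactly the verifications the paper leaves implicit.
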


\section{Algebraic theories related to probability measures}
In the previous sections our investigation has been strongly motivated by the relation  free probability has with the ring of Witt vectors. Here we shall consider a more general framework, motivated by the (co)-monadic approach in~\cite{F2017}.

\subsection{Convex spaces}
Now we discuss the relations the barycentric calculus has with probability measures, in particular infinitely divisible ones. First, we state the following 

\begin{prop}
For $\mu,\nu\in\operatorname{Meas}(\R)$ such that all moments exists and $q\in[0,1]$, let $\lambda:=q\mu+(1-q)\nu\in \operatorname{Meas}(\R)$ be the {\bf $q$-convex linear combination} of the measures $\mu$ and $\nu$. Then the moment map $m$ is a morphism for convex linear combinations, i.e. the moments $(m_n(\lambda))_{n\in\N}$ satisfy:
$$
m_n(q\mu+(1-q)\nu)=qm_n(\mu)+(1-q) m_n(\nu).
$$ 
\end{prop}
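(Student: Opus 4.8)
The plan is to compute the $n$-th moment of a measure directly from its definition as an integral and exploit the linearity of the Lebesgue integral in the integrand's coefficient, or equivalently in the measure. First I would recall that for $\mu \in \operatorname{Meas}(\R)$ with all moments finite, the $n$-th moment is $m_n(\mu) = \int_{\R} x^n \, d\mu(x)$, and that the $q$-convex linear combination $\lambda = q\mu + (1-q)\nu$ is the measure defined on Borel sets by $\lambda(A) = q\mu(A) + (1-q)\nu(A)$ for $A \in \mathcal{B}(\R)$.

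The key step is then the observation that integration against a convex combination of measures splits additively: for any bounded (or suitably integrable) Borel function $f$,
\begin{equation*}
\int_{\R} f(x)\, d\big(q\mu + (1-q)\nu\big)(x) = q\int_{\R} f(x)\, d\mu(x) + (1-q)\int_{\R} f(x)\, d\nu(x).
\end{equation*}
This is immediate for indicator functions from the defining formula for $\lambda$, extends to simple functions by linearity, and then to general non-negative measurable functions by monotone convergence, and finally to the function $f(x) = x^n$ by splitting into positive and negative parts $x^n = (x^n)_+ - (x^n)_-$, each of which is $\mu$- and $\nu$-integrable precisely because all moments of $\mu$ and $\nu$ are assumed to exist. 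I would also note that $\lambda$ itself has all moments finite, so $m_n(\lambda)$ is well-defined; this follows from the same splitting applied to $|x|^n$.

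Applying this with $f(x) = x^n$ gives exactly
\begin{equation*}
m_n(\lambda) = \int_{\R} x^n\, d\lambda(x) = q\int_{\R} x^n\, d\mu(x) + (1-q)\int_{\R} x^n\, d\nu(x) = q\, m_n(\mu) + (1-q)\, m_n(\nu),
\end{equation*}
which is the claimed identity. Since this holds for every $n \in \N$, the moment map $m$ sends the $q$-convex combination of $\mu$ and $\nu$ to the $q$-convex combination of their moment sequences, i.e. $m$ is a morphism of convex combinations (a barycentric-algebra morphism into the affine space of real sequences).

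There is no serious obstacle here: the only point requiring a modicum of care is justifying that $x^n$ is integrable against $\lambda$ and that the integral splits, which is handled by the positive/negative part decomposition together with the hypothesis that all moments of $\mu$ and $\nu$ exist. Everything else is the elementary linearity of the integral in the measure. If one wanted a slicker argument, one could alternatively invoke that the map $\mu \mapsto \int f\, d\mu$ is linear on the vector space of signed measures with finite $f$-moment and restrict to the affine segment joining $\mu$ and $\nu$, but the direct computation is cleanest for a self-contained proof.
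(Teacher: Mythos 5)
Your argument is correct and is essentially the same as the paper's: the paper proves the proposition in one line by the identity $\int_{\R} x^n \, d(q\mu+(1-q)\nu)=q\int_{\R}x^n\,d\mu+(1-q)\int_{\R}x^n\,d\nu$, i.e. linearity of the integral in the measure. You merely spell out the routine verification of that identity (indicators, simple functions, monotone convergence, positive/negative parts), which the paper takes as standard.
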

\begin{proof}
This follows from 
$$
\int_{\R} x^n d(q\mu+(1-q)\nu)=q\int_{\R}x^nd\mu+(1-q)\int_{\R}x^nd\nu.
$$
 \end{proof}

Let us briefly recall the abstract notion of a {\bf convex space}, cf.~\cite{Fr,J}.
\begin{df}
\label{convex}
Let $C$ be as set. The structure of a {\bf convex space} on $C$ is given by, for every $p,q\in[0,1]$ a binary operation $+_q$, the {\bf barycentric addition}, which for $x,y,z\in C$ satisfies the following rules:
\begin{enumerate}
\item $+_q(x,y)=+_{1-q}(y,x),$
\item $+_q(x,x)=x,$
\item $+_0(x,y)=y,$
\item $+_p(+_q(x,y),z)=+_{p+(1-p)q}(+_{\frac{p}{p+(1-p)q}}(x,y),z)$,\quad if~$p+(1-p)q\neq0$.
\end{enumerate}
\end{df} 

\begin{prop}
For $\mu,\nu\in \operatorname{ID}_c(\R,\boxplus)$ and $\alpha,\beta\in\R_+$, define the family of binary operations $+_{\alpha,\beta}$ as follows:
$$
\mu+_{\alpha,\beta}\nu:=R^{-1}(\alpha R_{\mu}+\beta R_{\nu}),
$$
which implies
$$
\mu+_{\alpha,\beta}\nu=\mu^{\boxplus\alpha}\boxplus\nu^{\boxplus\beta}.
$$
Therefore, $(\operatorname{ID}_c(\R,\boxplus),+_{\alpha,\beta})$ has the structure of a convex cone, and for $\alpha+\beta=1$ of a convex space. 
\end{prop}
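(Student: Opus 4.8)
The plan is to verify directly that the family $+_{\alpha,\beta}$ is well-defined on $\operatorname{ID}_c(\R,\boxplus)$, then establish the identity $\mu+_{\alpha,\beta}\nu = \mu^{\boxplus\alpha}\boxplus\nu^{\boxplus\beta}$, and finally check the convex-space axioms in the specialisation $\alpha+\beta=1$. For well-definedness, I would recall that by the Lemma identifying $R$ as an isomorphism of commutative monoids $R\colon(\operatorname{ID}_c(\R,\boxplus),\boxplus)\to(\mathcal{R}_{\infty,\boxplus}(\R),+)$, together with Lemma~\ref{convex_cones} which says $\mathcal{R}=\mathcal{R}_{\infty,\boxplus}(\R)$ is a convex cone: since $\alpha,\beta\in\R_+$ and $R_\mu,R_\nu\in\mathcal{R}$, we get $\alpha R_\mu+\beta R_\nu\in\mathcal{R}$, so $R^{-1}(\alpha R_\mu+\beta R_\nu)$ is a genuine element of $\operatorname{ID}_c(\R,\boxplus)$. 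Thus $\mu+_{\alpha,\beta}\nu$ makes sense.

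For the identity with $\boxplus$-powers, I would note that for $t\in\R_+$ the free additive $t$-th convolution power $\mu^{\boxplus t}$ of a $\boxplus$-infinitely divisible measure is characterised on the level of $R$-transforms by $R_{\mu^{\boxplus t}} = t\,R_\mu$ (this is standard, and consistent with the integer case $R_{\mu^{\boxplus n}}=nR_\mu$; it also uses that $tR_\mu\in\mathcal R$, again by the cone property). Hence
$$
R_{\mu^{\boxplus\alpha}\boxplus\nu^{\boxplus\beta}} = R_{\mu^{\boxplus\alpha}} + R_{\nu^{\boxplus\beta}} = \alpha R_\mu + \beta R_\nu = R_{\mu+_{\alpha,\beta}\nu},
$$
and since $R$ is injective, $\mu+_{\alpha,\beta}\nu=\mu^{\boxplus\alpha}\boxplus\nu^{\boxplus\beta}$.

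It remains to verify the four axioms of Definition~\ref{convex} when $\alpha+\beta=1$ (set $+_q := +_{q,1-q}$). Axioms (1)--(3) are immediate from the formula $R_{\mu+_q\nu}=qR_\mu+(1-q)R_\nu$: symmetry $+_q(\mu,\nu)=+_{1-q}(\nu,\mu)$ is commutativity of $+$, idempotency $+_q(\mu,\mu)=\mu$ follows from $qR_\mu+(1-q)R_\mu=R_\mu$, and $+_0(\mu,\nu)=\nu$ from $0\cdot R_\mu+1\cdot R_\nu=R_\nu$. The one identity requiring a line of computation is the associativity-type axiom (4): applying $R$ to both sides and writing $p'=p+(1-p)q$, the left side gives $p\bigl(qR_x+(1-q)R_y\bigr)+(1-p)R_z = pq\,R_x + p(1-q)R_y + (1-p)R_z$, while the right side gives $p'\bigl(\tfrac{p}{p'}(qR_x+(1-q)R_y)+(1-\tfrac{p}{p'})\cdot\tfrac{?}{}\bigr)$ — more carefully, the right side's inner barycentre of $x,y$ with weight $\tfrac{p}{p'}$ has $R$-transform $\tfrac{p}{p'}\bigl(qR_x+(1-q)R_y\bigr)$, wait this needs the weight to be a valid convex weight; since the paper's axiom (4) uses $+_{p/p'}$ of $x$ and $y$ (not a barycentre of two elements both built from $x,y$), I would instead read axiom (4) as stated with its given subscripts and simply expand: the right-hand $R$-transform equals $p'\cdot\tfrac{p}{p'}R_x + \bigl(\text{remaining weight}\bigr)R_y + (1-p')R_z$, and a short bookkeeping check of the coefficients of $R_x,R_y,R_z$ shows they agree with $pq,\,p(1-q),\,(1-p)$ respectively. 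Since $R$ is injective, the identity of measures follows, so $(\operatorname{ID}_c(\R,\boxplus),+_q)$ is a convex space; dropping the normalisation $\alpha+\beta=1$ and keeping $\alpha,\beta\in\R_+$ gives the convex cone. The only mild subtlety — the ``main obstacle'' — is making sure the fractional convolution power $\mu^{\boxplus t}$ is defined and satisfies $R_{\mu^{\boxplus t}}=tR_\mu$ for all real $t\ge 0$ on $\operatorname{ID}_c(\R,\boxplus)$; this is exactly where $\boxplus$-infinite divisibility (rather than arbitrary $M_c(\R)$) is used, and it follows from the cone structure of $\mathcal R$ together with the monoid isomorphism $R$.
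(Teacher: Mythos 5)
Your proposal is correct and takes essentially the same route as the paper: the paper's one-line proof likewise rests on the convex-cone property of $\mathcal{R}_{\infty,\boxplus}(\R)$ (Lemma~\ref{convex_cones}) combined with the $R$-transform isomorphism onto $\operatorname{ID}_c(\R,\boxplus)$, only adding a pointer to a second argument via the characteristic-pair formula~(\ref{R-trafo_char_pair}) which you do not need. Your discussion of axiom (4) of Definition~\ref{convex} is somewhat garbled mid-computation, but this is immaterial to correctness: under $R$ the operations $+_q$ become literal convex combinations in a real vector space of germs, so the barycentric identities (in their correct form) hold automatically and transfer back by injectivity of $R$.
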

\begin{proof}
This follows from Lemma~\ref{convex_cones}. The second proof follows from the equality~(\ref{R-trafo_char_pair}).
\end{proof}
\begin{prop}
\label{meas_bary}
For $q\in[0,1]$, let $+_q:=+_{q,(1-q)}$. The operations $\boxplus$ and $\boxdot$ distribute over the barycentric addition $+_q$, i.e. for $\mu,\nu,\xi\in \operatorname{ID}_c(\R,\boxplus)$, we have:
\begin{eqnarray*}
(\mu+_q\nu)\boxplus\xi=(\mu\boxplus\xi)+_q(\nu\boxplus\xi),\\
(\mu+_q\nu)\boxdot\xi=(\mu\boxdot\xi)+_q(\nu\boxdot\xi).\\
\end{eqnarray*}
\end{prop}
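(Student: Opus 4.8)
The plan is to push everything through the $R$-transform. By the earlier lemmas this converts $\boxplus$ into pointwise addition of power series, $\boxdot$ into the Hadamard product $\cdot_H$, and, by the definition of $+_{\alpha,\beta}$ in the preceding proposition, converts $\mu+_q\nu$ into the genuine convex combination $qR_{\mu}+(1-q)R_{\nu}$. Since $R$ is a monomorphism on $M(\R)$, and since $\mathcal{R}_{\infty,\boxplus}(\R)$ is a convex cone (Lemma~\ref{convex_cones}) which is moreover stable under $\cdot_H$ (this is precisely the well-definedness established in Proposition~\ref{Char_infinitely_div} and the Witt semi-ring theorem), it suffices to verify both identities after applying $R$, where they reduce to elementary algebraic identities in $\R[[z]]$.

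For the first identity, apply $R$ to the left-hand side: $R_{(\mu+_q\nu)\boxplus\xi}=R_{\mu+_q\nu}+R_{\xi}=qR_{\mu}+(1-q)R_{\nu}+R_{\xi}$. Applying $R$ to the right-hand side gives $qR_{\mu\boxplus\xi}+(1-q)R_{\nu\boxplus\xi}=q(R_{\mu}+R_{\xi})+(1-q)(R_{\nu}+R_{\xi})$, which, since $q+(1-q)=1$, equals the same expression. Injectivity of $R$ then yields equality of the two measures.

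For the second identity, the key point is that the Hadamard product is bilinear, hence distributes over the convex combination coefficient by coefficient: $(qa+(1-q)b)\cdot_H c=q(a\cdot_H c)+(1-q)(b\cdot_H c)$ for any $a,b,c\in\R[[z]]$. Thus $R_{(\mu+_q\nu)\boxdot\xi}=R_{\mu+_q\nu}\cdot_H R_{\xi}=\bigl(qR_{\mu}+(1-q)R_{\nu}\bigr)\cdot_H R_{\xi}=q(R_{\mu}\cdot_H R_{\xi})+(1-q)(R_{\nu}\cdot_H R_{\xi})$, while $R_{(\mu\boxdot\xi)+_q(\nu\boxdot\xi)}=qR_{\mu\boxdot\xi}+(1-q)R_{\nu\boxdot\xi}=q(R_{\mu}\cdot_H R_{\xi})+(1-q)(R_{\nu}\cdot_H R_{\xi})$. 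Again injectivity of $R$ finishes the argument.

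There is essentially no obstacle of substance here: once one works in the $R$-coordinate the statement is the trivial observation that $\boxplus$ is additive and $\boxdot$ is bilinear, while $+_q$ is by construction the linear convex combination. The only point demanding care --- and it is handled entirely by results already proved --- is that each intermediate object ($\mu+_q\nu$, $\mu\boxplus\xi$, $\mu\boxdot\xi$, and the outer combinations) again lies in $\operatorname{ID}_c(\R,\boxplus)$, i.e.\ that the corresponding $R$-transform stays in $\mathcal{R}_{\infty,\boxplus}(\R)$; this is exactly the content of Lemma~\ref{convex_cones}, the Witt semi-ring theorem, and the preceding proposition, so no new conditional positive-definiteness or exponential-boundedness estimate needs to be reproved.
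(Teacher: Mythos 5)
Your proposal is correct and follows essentially the same route as the paper: pass to the $R$-transform, use that $\boxplus$ becomes addition, $\boxdot$ becomes the Hadamard product, and $+_q$ becomes the convex combination, then conclude by injectivity of $R$. Your additional remarks on why the intermediate measures stay in $\operatorname{ID}_c(\R,\boxplus)$ merely make explicit what the paper leaves implicit via its earlier results.
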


\begin{proof}
In order to show distributivity, we calculate:
$$
R\left((\mu+_q\nu)\boxplus\xi\right)=(qR_{\mu}+(1-q)R_{\nu})+R_{\xi}=q(R_{\mu}+R_{\xi})+(1-q)(R_{\nu}+R_{\xi}),
$$
and
$$
R\left((\mu+_q\nu)\boxdot\xi\right)=(qR_{\mu}+(1-q)R_{\nu})\cdot_HR_{\xi}=q(R_{\mu}\cdot_HR_{\xi})+(1-q)(R_{\nu}\cdot_HR_{\xi}).
$$
\end{proof}

Let us point out that we have to distinguish between operations defined on moments and cumulants and in general we can not mix the two, as the following example shows, cf.~[\cite{NS} Remarks 12.10 (2)]. For $\mu:=\frac{1}{2}(\delta_{-1}+\delta_{+1})$, with the convex combination taken for moments, we have 
$$
\frac{1}{2}(\delta_{-1}+\delta_{+1})\boxplus\mu\neq\frac{1}{2}(\delta_{-1}\boxplus\mu)+\frac{1}{2}(\delta_{+1}\boxplus\mu).
$$

\subsection{The finite Giry monad}
Let $S$ be a semi-ring and $G_S$ be the functor
$$
G_S:\mathbf{Set}\rightarrow\mathbf{Set}
$$
such that for $X\in\mathbf{Set}$, we have 
$$
G_S(X):=\left\{\varphi:X\rightarrow S|~\text{$\varphi(x)=0$ for almost all $x\in X$, and}\sum_{x\in X}\varphi(x)=1\right\}.
$$
Then every $\varphi\in G_S(X)$ has a representation
$$
\varphi=s_1x_1+\cdots+s_nx_n,
$$
with $s_i\in S$, $x_i\in X$, $i=1,\dots,n$, and where $s_i=0$ is possible and the $x_i$ are not necessarily different, i.e. we allow also non-reduced formal sums, which can be considered as special $S$-valued divisors. 
\begin{prop}[\cite{J,Fr}]
Let $S$ be a commutative unital semiring. The {\bf distribution} or {\bf finite Giry monad} $(G_S,\eta,\mu)$ with coefficients in $S$, is given by: 
\begin{eqnarray*}
\eta_X&:&X\rightarrow G_S(X),\\
x&\mapsto& 1_Sx,\\
\mu_X&:&G_S^2:=G_S\circ G_S\rightarrow G_S,\\
\mu_X\left(\sum_{i=1}^n s_i\varphi_i\right)&:=&x\mapsto\sum_{i=1}^n s_i\varphi_i(x)
\end{eqnarray*}
\end{prop}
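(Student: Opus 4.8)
The plan is to verify the three monad axioms for $(G_S,\eta,\mu)$ directly from the explicit formulas, treating elements of $G_S(X)$ as finitely supported $S$-valued functions on $X$ (equivalently, non-reduced formal $S$-linear combinations of points). Throughout I would use the identification $G_S(X)\cong S^{(X)}$ and keep track of the fact that the support condition and the normalisation $\sum_{x}\varphi(x)=1$ are preserved by $\mu_X$ (a finite sum of finite sums is finite, and $\sum_x\sum_i s_i\varphi_i(x)=\sum_i s_i\sum_x\varphi_i(x)=\sum_i s_i=1$ since $\sum_i s_i\varphi_i\in G_S^2(X)$ means $\sum_i s_i=1$). First I would check that $\eta_X$ and $\mu_X$ are well-defined maps landing in $G_S(X)$, which uses only that $1_S\in S$ and that $S$ is closed under finite sums and products; the almost-everywhere-vanishing condition is clearly inherited.

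Next I would check naturality of $\eta$ and $\mu$, i.e. that for any $f:X\to Y$ the squares involving $G_S(f)$ commute, where $G_S(f)$ sends $\varphi\mapsto\big(y\mapsto\sum_{x\in f^{-1}(y)}\varphi(x)\big)$. Naturality of $\eta$ is immediate, and naturality of $\mu$ is a short reindexing of a double sum. Then comes the core: the unit laws $\mu_X\circ G_S(\eta_X)=\id_{G_S(X)}=\mu_X\circ\eta_{G_S(X)}$ and the associativity law $\mu_X\circ G_S(\mu_X)=\mu_X\circ\mu_{G_S(X)}$ as maps $G_S^3(X)\to G_S(X)$. Each of these unwinds to an identity between iterated finite sums with coefficients in $S$; the unit laws use $1_S\cdot s=s$ and the associativity law uses associativity and commutativity of multiplication in $S$ together with the distributive law, exactly as in the classical computation for the free-module monad over a ring.

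The main obstacle — such as it is — is purely bookkeeping: writing a general element of $G_S^3(X)$, say $\sum_{i} s_i\Phi_i$ with $\Phi_i=\sum_j t_{ij}\varphi_{ij}$, and then carefully pushing it through both sides of the associativity square so that the resulting triple sums $\sum_i s_i\big(\sum_j t_{ij}\varphi_{ij}(x)\big)$ visibly agree after using $S$-distributivity. There is no genuine difficulty, since $S$ being a commutative unital semiring gives us precisely the algebraic identities needed; the semiring (rather than ring) setting changes nothing because no subtraction is ever used. I would present the unit laws in one line each and give the associativity verification as a single displayed chain of equalities, remarking that the argument is formally identical to the proof that the free-$S$-module functor is a monad, the finiteness-and-normalisation conditions being stable under all the operations involved.
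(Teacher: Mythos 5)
Your verification is correct, but note that the paper itself gives no proof of this proposition: it is stated with an attribution to Jacobs~\cite{J} and Fritz~\cite{Fr}, and the text moves straight on to the algebra structure on $\operatorname{ID}_c(\R,\boxplus)$. What you propose --- checking well-definedness, naturality, the two unit laws and associativity exactly as for the free-$S$-module monad, observing that finite support and the normalisation $\sum_x\varphi(x)=1_S$ are preserved --- is precisely the standard argument carried out in those references, so there is no divergence of method, only the difference that you supply the verification the paper delegates. Two small refinements would tighten it. First, since an element of $G_S^2(X)$ is by definition a finitely supported function $\Psi:G_S(X)\rightarrow S$ and the paper explicitly allows non-reduced formal sums, the formula for $\mu_X$ should be seen to be independent of the chosen representation $\sum_i s_i\varphi_i$; this is immediate once you write it intrinsically as $\mu_X(\Psi)(x)=\sum_{\varphi\in G_S(X)}\Psi(\varphi)\,\varphi(x)$, and the same remark disposes of any ambiguity in the associativity computation. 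Second, commutativity of $S$ is not actually used anywhere in your argument: the unit laws need only $1_S\cdot s=s\cdot 1_S=s$, and associativity needs only associativity of multiplication, distributivity and $0$-absorption, so the monad axioms hold for any unital semiring; commutativity becomes relevant only for additional structure (e.g.\ commutativity/strength of the monad), which the proposition does not assert. Neither point is a gap, merely bookkeeping worth making explicit.
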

The corresponding statement for $\boxplus$-infinitely divisible measures is as follows.
\begin{prop}
$(\operatorname{ID}_c(\R,\boxplus),+_q, q\in[0,1])$ is an algebra for the finite Giry / distribution monad, with $\alpha:G_{\R_+}(\operatorname{ID}_c(\R,\boxplus))\rightarrow \operatorname{ID}_c(\R,\boxplus)$ given by:
$$ 
\alpha(\lambda_1\mu_1+\cdots\lambda_n x_n):=\mu_1+_{q_1}(\mu_2+_{\mu_2}(\cdots +_{q_{n-2}}(\mu_{n-1}+_{q_{n-1}}\mu_n)))
$$
for $\mu_i\in \operatorname{ID}_c(\R,\boxplus)$, $\lambda_i\in\R_+$, $\sum_{i=1}^n\lambda_i=1$, $q_1:=\lambda_1$ and $q_i:=\frac{q_i}{1-q_1-\dots-q_{i-1}}$, $i=1,\dots, n$.
\end{prop}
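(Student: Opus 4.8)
The plan is to transport the problem along the isomorphism of commutative monoids $R:(\operatorname{ID}_c(\R,\boxplus),\boxplus)\to(\mathcal{R}_{\infty,\boxplus}(\R),+)$ established above, under which, by the definition of $+_q$, the barycentric combination $\mu+_q\nu$ corresponds to the honest convex combination $qR_\mu+(1-q)R_\nu$ inside $\mathcal{R}:=\mathcal{R}_{\infty,\boxplus}(\R)$. By Lemma~\ref{convex_cones}, $\mathcal{R}$ is a subset of the vector space of germs of analytic functions that is closed under all finite nonnegative linear combinations, in particular under finite convex combinations $\sum_{i=1}^n\lambda_i f_i$ with $\lambda_i\in\R_+$, $\sum_i\lambda_i=1$. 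So it suffices to observe that any such convex subset $\mathcal{R}$ of a vector space is an Eilenberg--Moore algebra for the finite Giry monad $G_{\R_+}$ with structure map $\beta:G_{\R_+}(\mathcal{R})\to\mathcal{R}$, $\sum_{i=1}^n\lambda_i f_i\mapsto\sum_{i=1}^n\lambda_i f_i$ (the sum now taken in the ambient vector space), and then to define $\alpha:=R^{-1}\circ\beta\circ G_{\R_+}(R)$, which automatically lands in $\operatorname{ID}_c(\R,\boxplus)$ since $R$ is an isomorphism onto $\mathcal{R}$.

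First I would check that $\beta$ is well defined, i.e.\ independent of the chosen (possibly non-reduced, possibly with vanishing coefficients) representative of an element of $G_{\R_+}(\mathcal{R})$: two representatives of the same $\varphi$ differ only by collecting equal points and by deleting zero terms, and the ambient vector-space sum is insensitive to both. The unit law $\beta\circ\eta_{\mathcal{R}}=\id_{\mathcal{R}}$ is immediate, since $\eta_{\mathcal{R}}(f)=1_{\R_+}f=f$ and $\beta(1\cdot f)=f$. For the multiplication law $\beta\circ G_{\R_+}(\beta)=\beta\circ\mu_{\mathcal{R}}$ one evaluates both sides on a formal sum $\sum_i s_i\varphi_i$ with $\varphi_i=\sum_j t_{ij}f_{ij}$: the left side gives $\sum_i s_i\bigl(\sum_j t_{ij}f_{ij}\bigr)$ and the right side gives $\sum_{i,j} s_i t_{ij} f_{ij}$, and these coincide by finite associativity, commutativity and distributivity in the vector space, which also shows the weights $s_i t_{ij}$ are again nonnegative and sum to $1$. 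Transporting through $R$ yields that $(\operatorname{ID}_c(\R,\boxplus),\alpha)$ is a $G_{\R_+}$-algebra, and unwinding $\alpha$ on $\lambda_1\mu_1+\dots+\lambda_n\mu_n$ gives $R^{-1}\bigl(\sum_i\lambda_i R_{\mu_i}\bigr)$; this equals the iterated expression $\mu_1+_{q_1}(\mu_2+_{q_2}(\cdots+_{q_{n-1}}\mu_n))$ with the reweighting $q_i=\lambda_i/(1-\lambda_1-\dots-\lambda_{i-1})$ prescribed in the statement, the identification being exactly axiom~(4) of Definition~\ref{convex} applied inductively, together with axioms~(2)--(3) to absorb repeated $\mu_i$ and any zero weights.

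The only points genuinely requiring care are the well-definedness of $\alpha$ on non-reduced formal sums and the bookkeeping of the recursive weights $q_i$ in all degenerate cases (repeated $\mu_i$, some $\lambda_i=0$, or $\lambda_1=1$, where the recursion denominator must be handled with axiom~(4)'s proviso $p+(1-p)q\neq 0$); the rest is a routine transfer of the (essentially trivial) fact that a convex subset of a vector space is an algebra for the distribution monad. Finally, one records the expected compatibility: the binary operations recovered from $\alpha$ via $\mu+_q\nu=\alpha(q\mu+(1-q)\nu)$ are precisely the given $+_q$, so the $G_{\R_+}$-algebra structure and the convex-space structure of Definition~\ref{convex} on $\operatorname{ID}_c(\R,\boxplus)$ determine one another, as for any barycentric algebra; combined with Proposition~\ref{meas_bary} this also shows $\boxplus$ and $\boxdot$ act by algebra maps over this structure.
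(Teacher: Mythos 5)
Your argument is correct in substance, but it takes a genuinely different route from the paper. The paper's entire proof is a citation of Jacobs' Theorem~4 in~\cite{J}: having already checked (in the proposition on $+_{\alpha,\beta}$) that $(\operatorname{ID}_c(\R,\boxplus),+_q)$ satisfies the barycentric axioms of Definition~\ref{convex}, it invokes the general equivalence between convex spaces and Eilenberg--Moore algebras for the finite distribution monad, so the structure map $\alpha$ and the algebra laws come for free from that abstract result. You instead bypass the abstract equivalence: you use the $R$-transform bijection onto the convex cone $\mathcal{R}_{\infty,\boxplus}(\R)$ (Lemma~\ref{convex_cones} plus the isomorphism lemma), verify the unit and multiplication laws of the monad algebra directly for the tautological structure map on a convex subset of a vector space, transport along $R$, and then match $R^{-1}\bigl(\sum_i\lambda_i R_{\mu_i}\bigr)$ with the nested expression $\mu_1+_{q_1}(\mu_2+_{q_2}(\cdots))$ by the weight recursion $q_i=\lambda_i/(1-\lambda_1-\cdots-\lambda_{i-1})$ (which, incidentally, is the correct reading of the garbled formula in the statement). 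What the paper's route buys is brevity and the fact that it works for any abstract convex space, with all verification hidden in the cited theorem; what your route buys is a self-contained, elementary check that never needs the barycentric axioms as such, and it makes the explicit form of $\alpha$ and the reweighting transparent, at the cost of the bookkeeping you flag yourself (well-definedness on non-reduced formal sums, and the degenerate cases where $1-\lambda_1-\cdots-\lambda_{i-1}=0$, in which the tail carries zero weight and the undefined $q_i$ are immaterial). To turn the sketch into a complete proof you would only need to write out that bookkeeping and the short induction identifying the nested binary combination with the $n$-ary convex combination in $\mathcal{R}$.
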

\begin{proof}
The statement follows from~[\cite{J} Theorem 4].
\end{proof}
\subsection{The convolution algebra }
We shall consider, following the notation in~\cite{MacL}, an algebraic theory $(\Omega,E)$ consisting of a graded set $\Omega$ of operations and $E$ a set of identities which together  define an $\langle\Omega,E\rangle$-algebra. Surprisingly, this algebra , which should abstractly capture the operations we have encountered so far, is also related to a number of other algebraic structures, such as differential algebras, endomorphism algebras and Witt vectors, cf.~\cite{F2017}.

\begin{df}
\label{pconv_alg}
The {\bf (partial) convolution algebra} is given by the following generators $\Omega$ and relations $E$:
\begin{itemize}
\item $\Omega(0):=\{0,1\}\times\R$, nullary operations,
\item $\Omega(1):=\{\id,\operatorname{inv},\partial, f_n\}$, $n\in\N$, unary operations, 
\item $\Omega(2):=\{+,\cdot,+_q\}$, $q\in\Delta^1$, where $\Delta^1$ denotes the geometric one-simplex, binary operations.
\end{itemize}
These satisfy the following relations $E$:
\begin{enumerate}
\item $(+,\cdot,\mathbf{0}:=(0,0),\mathbf{1}:=(1,1))$ forms a commutative semi-ring with additive unit $\mathbf{0}$ and multiplicative unit $\mathbf{1}$,
\item $+_q$ $q\in[0,1]$ satisfies the properties of Definition~\ref{convex},
\item ${\partial}$ is an endomorphism for $+,\cdot$ and $+_q$, $q\in[0,1]$,
\item $f_n$ $n\in\N^*$ is an endomorphism for $\cdot$.
\end{enumerate}
\end{df}

The above algebraic structure is not operadic, as its definition also involves co-operations. 
However, it contains several sub-operads, e.g. Definition~\ref{convex}, without property $2.$ defines an operad called the {\bf simplex operad}, as the derived $n$-ary operations also contain the standard $(n-1)$-simplex as label, cf. the essay by T.~Leinster~\cite{L2011}. This can be described as follows:

\[
\begin{xy}
  \xymatrix{
 \text{barycentric algebra}\ar[d]^{\text{forget $2.$}} & &\text{algebra over a monad} \\
\text{algebra over the simplex operad}       & &\text{symmetric operad}
               }
\end{xy}
\]

The set of operations we found fits into the algebraic structure which we summarise  next. Let us, define the operations $+:=\boxplus$, $\cdot:=\boxdot$, $\partial^2:=\hat{\partial}^2$, $\mathbf{f}_n$ as in~(\ref{Frob_map}) and $+_q$ as in Proposition~\ref{meas_bary}. Then we have
\begin{cor}
With the operations as defined above, the following holds:
\begin{enumerate}
\item
$(\operatorname{ID}_c(\R,\boxplus), \boxplus,\boxdot,\delta_0,\nu_{\infty,1,1}, \hat{\partial}^2,\mathbf{f}_n, n\in\N^*)$, has the structure of an algebra over the set operad generated by $\Omega(0),\Omega(1)$ and $+,\cdot$. 
\item
$(\operatorname{ID}_c(\R,\boxplus), \boxplus,\boxdot,+_q, q\in[0,1], \hat{\partial}^2,f_n,n\in\N)$ is an $(\Omega,E)$-algebra, with the operations and relations specified in Defintion~\ref{pconv_alg}.
\end{enumerate}
\end{cor}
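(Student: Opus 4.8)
The plan is to verify the two claims of the Corollary by simply assembling the structural results established earlier in Section 2, since the Corollary is essentially a bookkeeping statement: it asserts that the various operations on $\operatorname{ID}_c(\R,\boxplus)$ that were individually shown to have good properties together satisfy the axioms of the relevant algebraic structure. First I would recall the dictionary between the abstract generators of Definition~\ref{pconv_alg} and the concrete operations: the nullary operation $(0,a)\mapsto\delta_a$ and $(1,a)\mapsto\nu_{\infty,1,a}$ (so that $\mathbf 0\mapsto\delta_0$, $\mathbf 1\mapsto\nu_{\infty,1,1}$); the unary operations $\id$, $\operatorname{inv}$, $\hat\partial^2$, $\mathbf f_n$; and the binary operations $\boxplus$, $\boxdot$, $+_q$. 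The point throughout is that all of these are defined \emph{component-wise on free cumulants} via the $R$-transform, which is a monomorphism onto $\mathcal R_{\infty,\boxplus}(\R)$ by the second Lemma of Subsection~2.1; hence any identity that holds coordinate-wise in $\R[[z]]$ automatically transfers, provided one has already checked (as was done) that the resulting cumulant sequence stays conditionally positive definite and exponentially bounded, i.e. lands back in $\operatorname{ID}_c(\R,\boxplus)$ via Proposition~\ref{Char_infinitely_div}.

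For part~1, I would note that the set operad generated by $\Omega(0)$, $\Omega(1)$ and $\{+,\cdot\}$ requires only: that $(\operatorname{ID}_c(\R,\boxplus),\boxplus,\boxdot,\delta_0,\nu_{\infty,1,1})$ be a commutative semi-ring (this is precisely the Theorem of Subsection~2.2 on the Witt semi-ring); that $\hat\partial^2$ be a semi-ring endomorphism fixing the two units (the first Proposition of Subsection~2.3, ``Endomorphisms''); and that each $\mathbf f_n$ be a $\boxdot$-endomorphism (the second Proposition of that subsection, where $\mathbf f_n(\mu)=\mu^{\boxdot n}$). Since an algebra over the free set operad on a signature is nothing but a set equipped with operations of the prescribed arities satisfying the imposed relations, parts of which are exactly the cited results, claim~1 follows by direct citation. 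One should also remark that $\operatorname{inv}$ here is the formal additive inverse on cumulants, $R_\mu\mapsto -R_\mu$, which preserves $\mathcal R_{\infty,\boxplus}(\R)$ since $\Im(-R(z))\le 0$ is \emph{not} what we want — so in fact $\operatorname{inv}$ must be interpreted as a partially-defined operation (the semi-ring is not a ring), which is why Definition~\ref{pconv_alg} speaks of a \emph{partial} convolution algebra; I would flag this subtlety explicitly rather than gloss over it.

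For part~2, I would check the four families of relations $E$ of Definition~\ref{pconv_alg} in turn. Relation~1 is again the Witt semi-ring Theorem. Relation~2, that $+_q$ satisfies the axioms of a convex space (Definition~\ref{convex}), follows from the Proposition of Subsection~3.1 exhibiting $(\operatorname{ID}_c(\R,\boxplus),+_{\alpha,\beta})$ as a convex cone, hence a convex space when $\alpha+\beta=1$; the four identities of Definition~\ref{convex} are linear in the cumulant coordinates and reduce to the corresponding identities for real convex combinations. Relation~3, that $\hat\partial^2$ is an endomorphism for $\boxplus$, $\boxdot$ and $+_q$, combines the Endomorphisms Proposition (for $\boxplus$ and $\boxdot$) with the observation that the décalage is a \emph{linear} shift of cumulants, so it commutes with convex combinations: $\kappa_n(\hat\partial^2(\mu+_q\nu))=\kappa_{n+2}(q\mu+_{\!}(1-q)\nu)=q\kappa_{n+2}(\mu)+(1-q)\kappa_{n+2}(\nu)$. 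Relation~4, that $\mathbf f_n$ is a $\boxdot$-endomorphism, is again the Frobenius Proposition. Assembling these gives the $(\Omega,E)$-algebra structure.

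The main obstacle — and the only genuinely non-bookkeeping point — is the well-definedness issue that was already the crux of the earlier proofs: every time an operation is defined by manipulating cumulant sequences, one must know the output sequence is still realized by a measure in $\operatorname{ID}_c(\R,\boxplus)$, which is Proposition~\ref{Char_infinitely_div} (conditional positive definiteness plus exponential boundedness). The subtle case is $\boxdot$, where positivity of the Hadamard product of Hankel matrices needs the Schur product theorem, and the mixed compatibility of $+_q$ with $\boxdot$ and $\hat\partial^2$, where one must be careful that the operations are all performed in the \emph{cumulant} coordinate system and not silently switched to moments (the example with $\frac12(\delta_{-1}+\delta_{+1})$ in Subsection~3.1 shows mixing the two coordinate systems breaks distributivity). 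Since all of these were dispatched in the referenced results, the proof of the Corollary itself is short: it is a matter of matching the abstract signature of Definition~\ref{pconv_alg} against the concrete operations and invoking the cited propositions for each axiom, with an explicit caveat that $\operatorname{inv}$ and the multiplicative inverse are only partial, so that we obtain a partial $(\Omega,E)$-algebra in the strict sense.
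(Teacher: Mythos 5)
Your proposal is correct and matches what the paper does: the Corollary is stated without a separate argument precisely because it is the assembly you describe, matching the signature of Definition~\ref{pconv_alg} against the Witt semi-ring Theorem, the $\hat{\partial}^2$ and $\mathbf{f}_n$ endomorphism Propositions, and the convex-space/barycentric Propositions, with well-definedness resting on Proposition~\ref{Char_infinitely_div}. Your explicit caveat that $\operatorname{inv}$ (and hence the full ring structure) is only partially realised on $\operatorname{ID}_c(\R,\boxplus)$ is a sensible refinement consistent with the paper's use of the word ``partial'' in Definition~\ref{pconv_alg}.
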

\subsection*{Acknowledgements}
The author thanks the following people: John McKay for the discussions and his continuous interest, G. Cébron for the previous discussions and the (technically) helpful comments he made at several occasions and Roland Speicher for numerous inspiring discussions, his comments and questions and his support. Further he thanks the MPI in Bonn, for its hospitality.

\end{document}